\newtheorem{thm}{Theorem}[section]
\newtheorem{cor}[thm]{Corollary}
\newtheorem{prop}[thm]{Proposition}
\newtheorem{lem}[thm]{Lemma}
\theoremstyle{definition}
\newtheorem{defn}[thm]{Definition}
\newcommand{\lemref}[1]{Lemma~{\rm \ref{#1}}}
\newcommand{\propref}[1]{Proposition~{\rm \ref{#1}}}
\newcommand{\defref}[1]{Definition~{\rm \ref{#1}}}
\newcommand{\remref}[1]{Remark~{\rm \ref{#1}}}
\newtheorem{rem}[thm]{Remark}
\makeatletter \@addtoreset{equation}{section}
\newcommand{\EE}{\mathbb E}
\newcommand{\NN}{\mathbb N}
\newcommand{\RR}{\mathbb R}
\newcommand{\PP}{\mathbb P}
\newcommand{\wdt}{\widetilde}
\title{\Large \sc A Counterintuitive Example in Inventory Management}
\author{Kurt L. Helmes, Richard H. Stockbridge and Chao Zhu}
\date{}
\begin{document}
\maketitle

\vspace{5 mm}

\begin{abstract}
  The paper \cite{helm:17} studies an inventory management problem under a long-term average cost criterion using weak convergence methods applied to average expected occupation and average expected ordering measures.  Under the natural condition of inf-compactness of the holding cost rate function, the average expected occupation measures are seen to be tight and hence have weak limits.  However inf-compactness is not a natural assumption to impose on the ordering cost function.  For example, the cost function composed of a fixed cost plus proportional (to the size of the order) cost is not inf-compact.  Intuitively, it would seem that imposing a requirement that the long-term average cost be finite ought to imply tightness of the average expected ordering measures; a lack of tightness should mean that the inventory process spends large amounts of time in regions that have arbitrarily large holding costs resulting in an infinite long-term average cost.  This paper demonstrates that this intuition is incorrect by identifying a model and an ordering policy for which the resulting inventory process has a finite long-term average cost, tightness of the average expected occupation measures but which lacks tightness of the average expected ordering measures.
\end{abstract}

\noindent
{\em MSC Classifications.}\/ 93E20, 90B05, 60H30
\vspace{3 mm}

\noindent
{\em Key words.}\/ inventory, impulse control, long-term average cost, expected occupation measures, expected ordering measures, tightness, weak convergence
\vspace{5 mm}

\section{Introduction}
This paper fleshes out a remark made in \cite{helm:17}.  That paper establishes under very weak conditions the existence of an optimal $(s_*,S_*)$ ordering policy for inventory management in one-dimensional diffusion models.  The problem under consideration involves minimizing the long-term average expected cost of an inventory process $X$ taking values in some interval ${\cal E} \subseteq \RR$ satisfying
\begin{equation} \label{controlled-dyn}
X(t) = x_0 + \int_0^t \mu(X(s))\, ds + \int_0^t \sigma(X(s))\, dW(s) + \sum_{k=1}^\infty I_{\{\tau_k \leq t\}} Y_k, \qquad t\geq 0,
\end{equation}
in which $x_0 \in {\cal E}$ denotes the initial inventory level, $(\tau,Y) = \{(\tau_k,Y_k): k \in \NN\}$ is an ordering policy in which $\tau_k$ denotes the time of the $k^{\mbox{\footnotesize th}}$ order and $Y_k \geq 0$ is the size of the order.  The drift rate given by $\mu$ and the diffusion coefficient $\sigma$ are assumed to be continuous, with $\sigma$ non-degenerate; $W$ is a standard Brownian motion process.  Note the condition $Y_k \geq 0$ means that this model only allows the manager to increase the inventory through ordering; it does not provide him or her the ability to decrease inventory.  Define the set ${\cal R} = \{(y,z) \in {\cal E}^2: y < z\}$ in which $y$ represents the pre-order inventory level and $z$ is the post-order level and set $\overline{\cal R} = \{(y,z) \in {\cal E}^2: y \leq z\}$.  The difference between ${\cal R}$ and $\overline{\cal R}$ is that $\overline{\cal R}$ includes orders of size $0$ whereas ${\cal R}$ does not.

The long-term average criterion is
\begin{equation} \label{lta-cost}
J_0(\tau,Y) = \limsup_{t\rightarrow \infty} \mbox{$\frac{1}{t}$} \EE\left[\int_0^t c_0(X(s))\, ds + \sum_{k=1}^\infty I_{\{\tau_k \leq t\}} c_1(X(\tau_k-),X(\tau_k))\right]
\end{equation}
in which $c_0\geq 0$ is the holding/back-order cost rate function and $c_1 \geq k_1 > 0$ denotes the cost for ordering.  Here, $k_1$ represents the fixed cost per order while $c_1(y,z)$ gives the total ordering cost for raising the inventory level from $y$ to $z$.  Notice, in particular, the assumption $c_1 \geq k_1$ means that actively ordering nothing incurs a cost of at least $k_1$ so differs from not ordering which has no cost.  The paper \cite{helm:17} also imposes minor additional conditions such as continuity on $c_0$ and $c_1$ in Condition 2.3.

The approach taken in \cite{helm:17} is to capture the expected behaviour of the inventory process $X$ associated with an ordering policy $(\tau,Y)$ and its costs through average expected occupation and average expected ordering measures.  These are defined for each $t > 0$ by
\begin{equation} \label{mus-t-def}
\begin{array}{rcll}
\mu_{0,t}(\Gamma_0) &=& \displaystyle \mbox{$\frac{1}{t}$} \EE\left[\int_0^t I_{\Gamma_0}(X(s))\, ds\right], & \quad \Gamma_0 \in {\cal B}({\cal E}), \rule[-15pt]{0pt}{15pt} \\
\mu_{1,t}(\Gamma_1)  &=& \displaystyle \mbox{$\frac{1}{t}$} \EE\left[\sum_{k=1}^\infty I_{\{\tau_k \leq t\}} I_{\Gamma_1}(X(\tau_k-),X(\tau_k))\right], & \quad \Gamma_1 \in {\cal B}(\overline{\cal R}).
\end{array}
\end{equation}
The long-term average cost is expressed using $\mu_{0,t}$ and $\mu_{1,t}$ as
$$J_0(\tau,Y) = \limsup_{t\rightarrow \infty} \left(\int_{\cal E} c_0(x)\, \mu_{0,t}(dx) + \int_{\overline{\cal R}} c_1(y,z)\, \mu_{1,t}(dy\times dz)\right).$$
The paper then employs weak convergence techniques to more easily show optimality of an $(s,S)$ policy in the general class of admissible policies.  

One of the benefits of this approach is the simplicity of establishing tightness of $\{\mu_{0,t}\}$ as $t\rightarrow \infty$.  For example, when ${\cal E}$ is a bounded interval, viewing each $\mu_{0,t}$ as a measure on the closure $\overline{\cal E}$ of ${\cal E}$ immediately implies that $\{\mu_{0,t}\}$ is tight so there will exist limiting measures as $t\rightarrow \infty$.  When ${\cal E}$ is unbounded, it is natural to assume that $c_0$ converges to $\infty$ at each infinite boundary; this means that ``an infinite holding cost rate occurs when an infinite amount of inventory is either present or back-ordered.''  Under this assumption of inf-compactness on $c_0$, Proposition~3.3 of \cite{helm:17} shows that $\{\mu_{0,t}\}$ is tight whenever $J_0(\tau,Y) < \infty$; in this paper, \propref{mu0t-tight} establishes the same result for our particular model and ordering policy.

Imposing inf-compactness on $c_1$ would similarly imply $\{\mu_{1,t}\}$ is tight whenever $J_0(\tau,Y) < \infty$.  However, the assumption of inf-compactness is not natural for the ordering cost function $c_1$.  For example, the most commonly studied ordering cost function is $c_1(y,z) = k_1 + k_2(z-y)$ for $(y,z) \in {\cal R}$ in which the fixed cost of $k_1$ is charged along with a cost that is proportional to the size of the order.  When ${\cal E} = \RR$, for example, the set $\{(y,z) \in {\cal R}: c_1(y,z) \leq K\}$ is not compact for any $K < \infty$.  In addition, if one were to allow efficiencies of scale by considering a concave function $H: \RR_+ \rightarrow \RR_+$ and taking $c_1(y,z) = k_1 + H(z-y)$, again the set $\{(y,z): c_1(y,z) \leq K\}$ is not compact for any $K < \infty$. 

It would seem that a sufficient condition to ensure $\{\mu_{1,t}\}$ is tight ought to be that $J_0(\tau,Y) < \infty$.  Intuitively, placing many orders resulting in unbounded inventory levels means that $X$ ought to spend large amounts of time in regions where the function $c_0$ imposes a heavy penalty leading to an unbounded expected holding cost and hence $J_0(\tau,Y) = \infty$.

The purpose of this paper is to illustrate that this intuition is incorrect.  To illustrate the idea simply, we begin by considering a deterministic inventory model having constant demand rate and examine a particular ordering policy.  The analysis is fairly straightforward.  The second example is very similar but perturbs the deterministic demand by a Brownian motion process.  The ensuing drifted Brownian motion process requires more sophisticated analysis.

\section{Deterministic Demand Rate Example}
The aim of this example is to illustrate that the tightness of the average ordering measures cannot be easily characterized for the most commonly studied ordering cost function. The example consists of a deterministic inventory model so to distinguish this model from the stochastic models in the rest of the paper, lower case notation will be used.  

An ordering policy $(\mathbf{t},\mathbf{y})$ consists of a sequence of times $\mathbf{t}=\{t_k: k\in \NN\}$ and corresponding sequence of order sizes $\mathbf{y}=\{y_k: k\in \NN\}$.  The inventory process $x$ taking values in $\RR$ satisfies
\begin{equation} \label{determ-dyn}
x(t) = -t + \sum_{k=1}^\infty I_{\{t_k \leq t\}} y_k;
\end{equation}
notice that the initial inventory is $x(0)=0$ and demand is at a constant rate of $1$ unit per unit of time.  The cost structure consists of the holding/back-order rate function $c_0(x) = 2|x|$, the ordering cost function $c_1(y,z) = k_1 + (z-y)$ and the long-term average cost is
\begin{equation} \label{determ-lta-cost}
j_0(\mathbf{t},\mathbf{y}) = \limsup_{t\rightarrow \infty} \mbox{$\frac{1}{t}$} \left(\int_0^t c_0(x(s))\, ds + \sum_{k=1}^\infty I_{\{t_k \leq t\}} c_1(x(t_k-),x(t_k))\right).
\end{equation}
For each $t > 0$, the average occupation and and average ordering measures, $m_{0,t}$ and $m_{1,t}$ respectively, are defined by
\begin{equation} \label{determ-mus-t-def}
\begin{array}{rcll}
m_{0,t}(\Gamma_0) &=& \displaystyle \mbox{$\frac{1}{t}$} \int_0^t I_{\Gamma_0}(x(s))\, ds, & \quad \Gamma_0 \in {\cal B}({\cal E}), \rule[-15pt]{0pt}{15pt} \\
m_{1,t}(\Gamma_1) &=& \displaystyle \mbox{$\frac{1}{t}$} \sum_{k=1}^\infty I_{\{t_k \leq t\}} I_{\Gamma_1}(x(t_k-),x(t_k)), & \quad \Gamma_1 \in {\cal B}(\overline{\cal R}).
\end{array}
\end{equation}

The goal of this example is to identify a particular ordering policy $(\mathbf{t},\mathbf{y})$ with $j_0(\mathbf{t},\mathbf{y}) < \infty$, $\{m_{0,t}\}$ tight as $t\rightarrow \infty$, such that  the collection $\{m_{1,t}\}$ is not tight; that is, show that for some $\epsilon > 0$ and for any compact set $\Gamma \subset {\cal R}$, there is some $t$ such that $m_{1,t}(\Gamma^c) > \epsilon$.

We begin with a description of the ordering policy $(\mathbf{t},\mathbf{y})$ which runs in cycles, each of which is composed of two phases.  For cycle $i=1,2,3,\ldots$, Phase 1 consists of using the $(0,1)$-ordering policy a total of $2^{i-1}$ times.  Phase 2 is quite non-standard involving  a single $(0,2^{(i-1)/2})$-ordering policy followed immediately by using the $(2^{(i-1)/2},2^{(i-1)/2})$-ordering policy $2^{i-1}$ times; this phase consists of placing many orders of size $0$ in a row that incur the fixed cost but no additional cost in addition to the single order of higher amount.

Rather than use a single index for the order times and amounts, we employ a double index $(i,j)$ in which $i$ denotes the cycle and $j$ gives the number of the order within cycle $i$.  

\begin{defn}[{\em The Policy} $(\mathbf{t},\mathbf{y})$] \label{determ-ty-def}
For the first cycle, define
$$\left\{\begin{array}{rcl} t_{1,1}&=&0, \\ y_{1,1}&=&1, \end{array}\right. \quad \mbox{and} \quad \left\{\begin{array}{rcl} t_{1,2}&=&1, \\ y_{1,2}&=&1, \end{array} \right. \quad \left\{\begin{array}{rcl} t_{1,3}&=&t_{1,2}=1 \\ y_{1,3}&=&0.\end{array} \right.$$
Observe that the first order occurs immediately, making $x(0+) = 1$, the inventory level then declines at rate 1 until it hits $\{0\}$ at time $1$ completing the first phase.  Phase 2 begins with an order up to level $2^{(1-1)/2}=1$ at time $1$, immediately followed at time $1$ by an order of size $0$; the process $x$ then decreases and hits $0$ at time $2$, completing Phase 2 of the cycle 1.

For cycle $i=2,3,4,\ldots$, define
$$\left\{\begin{array}{rcl} t_{i,1}&=&t_{i-1,2^i-1} + 2^{(i-1)/2}, \\ y_{i,1}&=&1, \end{array}\right. \quad \mbox{and}\quad \left\{\begin{array}{rcl} t_{i,j}&=& t_{i,j-1}+1,\\ y_{i,j}&=& 1, \end{array} \right. \mbox{ for } j=2, \ldots, 2^{i-1},$$
completing Phase 1 of cycle $i$.  Then define the orders during Phase 2 by 
$$\left\{\begin{array}{rcl} t_{i,2^{i-1}+1}&=& t_{i,2^{i-1}}+1, \\ y_{i,2^{i-1}+1}&=&2^{(i-1)/2}, \end{array} \right. \quad \mbox{and} \quad \left\{\begin{array}{rcl} t_{i,2^{i-1}+1+j}&=&t_{i,2^{i-1}+1}, \\ y_{i,2^{i-1}+1+j}&=&0, \end{array} \right. \mbox{ for } j=1,\ldots,2^{i-1};$$
cycle $i$ ends when $x$ hits $\{0\}$ at time $t_{i+1,1}$.  Notice that as $i$ increases, the {\em number}\/ of $(0,1)$-ordering sub-cycles in Phase 1 increases exponentially by a factor of 2, followed in Phase 2 by a single order of (base $\sqrt{2}$) exponentially increasing {\em size}\/ which is immediately followed by a (base $2$) exponentially increasing {\em number}\/ of orders of size $0$. 
\end{defn}

\begin{rem} \label{no-0-size-order}
When formulating the ordering costs, traditionally no distinction is made between not ordering and ordering nothing, with no cost incurred in either case.  In contrast, the formulation in this paper has orders of size $0$ incur the fixed cost $k_1$.  The policy $(\mathbf{t},\mathbf{y})$ uses $0$-size orders to create non-trivial masses in the average ordering measure $m_{1,t}$ at arbitrarily large values on the diagonal $z=y$ without affecting the length of Phase 2, provided $t$ is sufficiently large.  Under the traditional formulation, it is possible to place many orders of suitably small sizes, resulting in similar masses in neighbourhoods near the diagonal at arbitrary distances from the origin (with $t$ large), such that the length of Phase 2 is barely increased.  The analysis is essentially the same as in this manuscript but requires more careful bookkeeping without affecting the limiting results.  The costly $0$-size orders considerably simplify the computations.
\end{rem}

\begin{prop}
The long-term average cost of the policy $(\mathbf{t},\mathbf{y})$ of \defref{determ-ty-def} satisfies
$$j_0(\mathbf{t},\mathbf{y}) < \infty.$$
\end{prop}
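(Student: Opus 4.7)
\medskip

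\noindent\textbf{Proof plan.} The strategy is to bound the cost and length of each cycle $i$ separately, show both are of the same exponential order $2^{i-1}$, and conclude that the ratio of cumulative cost to cumulative time remains bounded.

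\medskip

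First I would compute the length of cycle $i$. Phase 1 consists of $2^{i-1}$ sub-cycles in which the inventory goes from $1$ down to $0$ at unit rate, so Phase 1 has length $2^{i-1}$. In Phase 2 the inventory is raised once to $2^{(i-1)/2}$ and then (after the $2^{i-1}$ size-zero orders, which are instantaneous) decreases at unit rate back to $0$, so Phase 2 has length $2^{(i-1)/2}$. Hence the cycle has length $L_i = 2^{i-1} + 2^{(i-1)/2}$, and in particular the cumulative time $T_n := \sum_{i=1}^n L_i$ satisfies $T_n \asymp 2^n$.

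\medskip

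Next I would compute the cost of cycle $i$. The holding cost during a single Phase 1 sub-cycle is $\int_0^1 2(1-s)\,ds = 1$, so Phase 1 contributes $2^{i-1}$ in holding cost; during Phase 2 the holding cost is $\int_0^{2^{(i-1)/2}} 2(2^{(i-1)/2}-s)\,ds = 2^{i-1}$. The ordering cost in Phase 1 is $2^{i-1}(k_1+1)$ (one $(0,1)$-order per sub-cycle). In Phase 2 there is one order of cost $k_1 + 2^{(i-1)/2}$ followed by $2^{i-1}$ size-$0$ orders each costing $k_1$, totalling $(2^{i-1}+1)k_1 + 2^{(i-1)/2}$. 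Adding everything, the cycle-$i$ cost $C_i$ satisfies
\[
C_i \;=\; 2^{i-1}(3 + 2k_1) + 2^{(i-1)/2} + k_1 \;=\; O(2^{i-1}).
\]
So both $C_i$ and $L_i$ are of order $2^{i-1}$, and the cumulative cost through cycle $n$ satisfies $\sum_{i=1}^n C_i \asymp 2^n$.

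\medskip

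Finally I would pass to a general $t > 0$. Given $t$, let $n(t)$ be the unique index with $T_{n(t)-1} < t \le T_{n(t)}$; since the cost is nondecreasing in $t$, the numerator of \eqref{determ-lta-cost} at time $t$ is bounded above by $\sum_{i=1}^{n(t)} C_i$, while the denominator $t$ is bounded below by $T_{n(t)-1}$. Using the geometric growth $T_n \asymp 2^n \asymp L_n$, I would conclude that
\[
j_0(\mathbf{t},\mathbf{y}) \;\le\; \limsup_{n\to\infty}\frac{\sum_{i=1}^{n} C_i}{T_{n-1}} \;<\; \infty,
\]
since the ratio $\sum_{i=1}^n C_i / T_{n-1}$ is bounded by a constant depending only on $k_1$. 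The only slightly delicate point is checking that the partial-cycle ratio does not blow up, but this follows immediately from the boundedness of $C_{n(t)}/T_{n(t)-1}$, which is again of order $1$ because $L_{n(t)-1}$ and $L_{n(t)}$ are within a factor of $2$ of each other. There is no serious obstacle; the content is really just the bookkeeping showing that the $2^{i-1}$ size-$0$ orders in Phase 2, though numerous, contribute only $2^{i-1}k_1$ to the cost, which is absorbed by the Phase 1 length $2^{i-1}$.
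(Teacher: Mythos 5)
Your proposal is correct and follows essentially the same route as the paper's proof: compute the cycle length $2^{i-1}+2^{(i-1)/2}$ and cycle cost $(2k_1+3)2^{i-1}+2^{(i-1)/2}+k_1$, then bound the time-$t$ average by the full cost through the current cycle divided by the start time of that cycle, using the geometric growth of the cumulative times to control the off-by-one-cycle factor. All of your intermediate computations agree with the paper's.
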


\begin{proof}
It is helpful to determine both the length of each cycle $i$ and the associated cost of cycle $i$.  Note that Phase 1 consists of $2^{i-1}$ $(0,1)$-sub-cycles, each of duration $1$ unit of time.  The length of Phase 2 is determined by the single $(0,2^{(i-1)/2})$-policy lasting $2^{(i-1)/2}$ units of time.  Thus, the length of cycle $i$ is
$$t_{i+1,1} -t_{i,1} = 2^{i-1} + 2^{(i-1)/2}.$$
In examining the costs incurred during cycle $i$, consider first the holding costs.  During Phase 1 by construction, $x(t) = 1-t+t_{i,j}$ for $t$ in the unit time interval $(t_{i,j},t_{i,j+1})$ for $j=1,\ldots,2^{i-1}$.  Phase 2 occupies the interval $(t_{i,2^{i-1}+1},t_{i+1,1})=(t_{i,2^{i-1}+1},t_{i,2^{i-1}+1}+2^{(i-1)/2})$ with $x(t) = 2^{(i-1)/2} - t + t_{i,2^{i-1}+1}$.  Thus, the holding cost during cycle $i$ is
\begin{eqnarray} \label{determ-cycle-i-holding-cost} \nonumber
\int_{t_{i,1}}^{t_{i+1,1}} c_0(x(t))\, dt &=& \displaystyle \sum_{j=1}^{2^{i-1}} \int_{t_{i,j}}^{t_{i,j+1}} 2(1-t+t_{i,j})\, dt + \int_{t_{i,2^{i-1}+1}}^{t_{i+1,1}} 2(2^{(i-1)/2}-t+t_{i,2^{i-1}+1})\, dt \\
&=& 2^{i-1} + 2^{i-1} = 2^i. \rule{0pt}{18pt}
\end{eqnarray}
During cycle $i$, there are $2^{i-1}$ orders of size 1, one order of size $2^{(i-1)/2}$ and $2^{i-1}$ orders of size $0$, resulting in a total ordering cost of 
\begin{equation} \label{determ-cycle-i-order-cost}
\begin{array}{rcl} \displaystyle
\sum_{j=1}^{2^i+1} c_1(x(t_{i,j}-),x(t_{i,j})) &=& (k_1+1) 2^{i-1} + (k_1 + 2^{(i-1)/2}) + k_1 2^{i-1} \\
&=& (2k_1+1) 2^{i-1} + 2^{(i-1)/2} + k_1.
\end{array}
\end{equation}
Obviously, the total cost during cycle $i$ is the sum of \eqref{determ-cycle-i-holding-cost} and \eqref{determ-cycle-i-order-cost}.

It will be helpful to determine the total elapsed time during the first $n$ cycles for $n=1,2,3,\ldots$ as well as the total cost during these cycles.  The total time is
$$t_{n+1,1} = \sum_{i=1}^n (2^{i-1} + 2^{(i-1)/2}) = 2^n + \frac{2^{n/2}-1}{2^{1/2}-1} - 1$$
while the total cost of the first $n$ cycles is 
\begin{eqnarray*}
\lefteqn{\int_0^{t_{n+1,1}} c_0(x(t))\, dt + \sum_{i=1}^n \sum_{j=1}^{2^i+1} c_1(x(t_{i,j}-),x(t_{i,j}))} \\ 
&\qquad =& \sum_{i=1}^n (2^i + (2k_1+1) 2^{i-1} + 2^{(i-1)/2} + k_1) \\
&\qquad =& (k_1+1)2^{n+1} + 2^n + \frac{2^{n/2}-1}{2^{1/2}-1} + k_1n - 2k_1 - 3.
\end{eqnarray*}

We now turn to an examination of the average cost over the interval $[0,t]$.  For each $t \geq 0$, define $i(t)$ to satisfy $t_{i(t),1} \leq t < t_{i(t)+1,1}$; that is, the time $t$ occurs during the $i(t)^{\mbox{\footnotesize th}}$ cycle.  Then
\begin{eqnarray} \label{determ-avg-cost-bd} \nonumber
\lefteqn{\frac{1}{t} \left(\int_0^t c_0(x(s))\, ds + \sum_{i=1}^{\infty}\sum_{j=1}^{2^i+2} I_{\{t_{i,j} \leq t\}} c_1(x(t_{i,j}-),x(t_{i,j}))\right)} \\  \nonumber
&\qquad \leq& \frac{1}{t} \left(\int_0^{t_{i(t)+1,1}} c_0(x(s))\, ds + \sum_{i=1}^{\infty}\sum_{j=1}^{2^i+2} I_{\{t_{i,j} \leq t_{i(t)+1,1}\}} c_1(x(t_{i,j}-),x(t_{i,j}))\right) \\ 
&\qquad =& \frac{t_{i(t)+1,1}}{t} \cdot \frac{1}{t_{i(t)+1,1}} \sum_{i=1}^{i(t)+1} (2^i + (2k_1+1) 2^{i-1} + 2^{(i-1)/2} + k_1) \\ \nonumber
&\qquad \leq& \frac{t_{i(t)+1,1}}{t_{i(t),1}} \cdot \frac{(k_1+1)2^{i(t)+2} + 2^{i(t)+1} + \frac{2^{(i(t)+1)/2}-1}{2^{1/2}-1} + k_1 (i(t)+1) - 2k_1 - 3}{2^{i(t)+1} + \frac{2^{(i(t)+1)/2}-1}{2^{1/2}-1} - 1} \\ \nonumber
&\qquad \leq& \frac{t_{i(t)+1,1}}{t_{i(t),1}} \cdot \frac{2k_1+3 + \frac{2^{-(i(t)+1)/2}-2^{-(i(t)+1)}}{2^{1/2}-1} + (k_1i(t) - k_1 - 3)2^{-(i(t)+1)}}{1 + \frac{2^{-(i(t)+1)/2}-2^{-(i(t)+1)}}{2^{1/2}-1} - 2^{-(i(t)+1)}} .
\end{eqnarray}
Using the expression for $t_{n,1}$ yields
$$\frac{t_{i(t)+1,1}}{t_{i(t),1}} = \frac{2^{i(t)+1} + \frac{2^{(i(t)+1)/2}-1}{2^{1/2}-1} - 1}{2^{i(t)} + \frac{2^{i(t)/2}-1}{2^{1/2}-1} - 1} = \frac{2 + \frac{2^{(1-i(t))/2}-2^{-i(t)}}{2^{1/2}-1} - 2^{-i(t)}}{1 + \frac{2^{-i(t)/2}-1}{2^{1/2}-1} - 2^{-i(t)}}$$
so $\lim_{t\rightarrow \infty} \frac{t_{i(t),1}}{t_{i(t)-1,1}} = 2$.  As a result, there exists some $T < \infty$ such that for all $t \geq T$, $\frac{t_{i(t)+1,1}}{t_{i(t),1}} \leq 3$ and therefore 
$$\limsup_{t\rightarrow \infty} \frac{1}{t} \left(\int_0^t c_0(x(s))\, ds + \sum_{i=1}^\infty \sum_{j=1}^{2^i+2} I_{\{t_{i,j} \leq t\}} c_1(x(t_{i,j}-),x(t_{i,j}))\right) \leq 6k_1 + 9 < \infty,$$
completing the proof.
\end{proof}

Since the holding cost rate function $c_0$ is inf-compact and $j_0(\mathbf{t},\mathbf{y}) < \infty$, the next result is straightforward; we provide the proof of the result for the stochastic example in \propref{mu0t-tight}.

\begin{cor}
For the policy $(\mathbf{t},\mathbf{y})$ of \defref{determ-ty-def}, define the occupation measures $\{m_{0,t}\}$ by \eqref{mus-t-def}.  Then $\{m_{0.t}\}$ is tight as $t\rightarrow \infty$; that is, for each $\epsilon > 0$, there exists some compact set $\Gamma \subset \RR_+$ and $T < \infty$ such that $m_{0,t}(\Gamma^c) < \epsilon$ for all $t \geq T$.
\end{cor}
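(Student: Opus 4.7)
The argument is a standard Markov-inequality bound combined with the inf-compactness of the holding cost rate $c_0(x)=2|x|$. First I observe that for every $K>0$ the sub-level set $\{x\in\RR : c_0(x)\le K\}=[-K/2,K/2]$ is compact in $\RR$, so establishing tightness amounts to controlling the mass that $m_{0,t}$ places on $\{x : c_0(x)>K\}$ for large $K$.

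Next, for each $t>0$ and $K>0$, Markov's inequality applied to the probability measure $m_{0,t}$ gives
$$m_{0,t}\bigl(\{x : c_0(x)>K\}\bigr) \;\le\; \frac{1}{K}\int_{\RR}c_0(x)\,m_{0,t}(dx) \;=\; \frac{1}{Kt}\int_0^t c_0(x(s))\,ds.$$
Because $c_1\ge k_1>0$, the ordering term in \eqref{determ-lta-cost} is nonnegative, so the holding-cost average is dominated by the full long-term average cost. Since the preceding proposition shows $j_0(\mathbf{t},\mathbf{y})<\infty$, there exist constants $M<\infty$ and $T<\infty$ such that $\frac{1}{t}\int_0^t c_0(x(s))\,ds \le M$ for all $t\ge T$.

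Finally, given $\epsilon>0$, choose $K$ large enough that $M/K<\epsilon$ and set $\Gamma=[-K/2,K/2]$; then $\Gamma$ is compact and the two bounds above yield $m_{0,t}(\Gamma^c)<\epsilon$ for every $t\ge T$. (One may replace $\Gamma$ by the subset $[0,K/2]\subset\RR_+$ since the policy of \defref{determ-ty-def} keeps $x(\cdot)\ge 0$ throughout.) There is no real obstacle here: inf-compactness of $c_0$ combined with a finite long-term average cost forces tightness of $\{m_{0,t}\}$ directly via Chebyshev, which is precisely why the interesting phenomenon in this paper is the failure of the analogous tightness for the ordering measures $\{m_{1,t}\}$.
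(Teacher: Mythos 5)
Your argument is correct and is essentially identical to the paper's: the paper omits a separate proof of this corollary, deferring to \propref{mu0t-tight} for the stochastic analogue, and that proof is exactly your Chebyshev/Markov bound $m_{0,t}(\Gamma^c)\le \frac{1}{K}\int c_0\,dm_{0,t}$ combined with the eventual boundedness of the average holding cost and the compactness of the sub-level sets of $c_0$. Your parenthetical observation that one may take $\Gamma\subset\RR_+$ because the policy keeps $x(\cdot)\ge 0$ is a correct and welcome detail.
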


Now we get to the point of the example.

\begin{prop}
For the policy $(\mathbf{t},\mathbf{y})$ of \defref{determ-ty-def}, the collection of ordering measures $\{m_{1,t}: t>0\}$ defined by \eqref{determ-mus-t-def} is not tight.
\end{prop}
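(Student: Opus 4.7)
The plan is to exhibit an explicit subsequence of times at which $m_{1,t}$ places a large fraction of its total mass on the diagonal $\{z=y\}$, then use the fact that any compact $\Gamma\subset\mathcal{R}$ must sit at a strictly positive distance from that diagonal. Take $t_n := t_{n+1,1}$, the time at which the $n$-th cycle just ends.

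Next I would do the elementary bookkeeping. During Phase 2 of cycle $i$, the policy $(\mathbf{t},\mathbf{y})$ of \defref{determ-ty-def} places $2^{i-1}$ orders of size $0$, each occurring from and to the level $2^{(i-1)/2}$; these all correspond to the same point $\bigl(2^{(i-1)/2},2^{(i-1)/2}\bigr)$ on the diagonal of $\overline{\mathcal{R}}$. Summing over $i=1,\ldots,n$ gives $\sum_{i=1}^n 2^{i-1}=2^n-1$ atoms of $m_{1,t_n}$ lying on the diagonal, hence
$$m_{1,t_n}\bigl(\{(y,z):y=z\}\bigr)\;\ge\;\frac{2^n-1}{t_{n+1,1}}.$$

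Now fix any compact $\Gamma\subset\mathcal{R}$. Since $\mathcal{R}=\{(y,z)\in\mathcal{E}^2:y<z\}$ is open in $\RR^2$ and its complement contains the diagonal, $\Gamma$ is closed, bounded, and sits at a strictly positive distance from $\{z=y\}$. In particular $\Gamma$ contains \emph{none} of the atoms $\bigl(2^{(i-1)/2},2^{(i-1)/2}\bigr)$, so the displayed lower bound is in fact a lower bound on $m_{1,t_n}(\Gamma^c)$. Using the formula $t_{n+1,1}=2^n+\tfrac{2^{n/2}-1}{2^{1/2}-1}-1$ already derived in the previous proof, one sees that
$$\frac{2^n-1}{t_{n+1,1}}\;\longrightarrow\;1\qquad\text{as }n\to\infty,$$
so for any $\epsilon\in(0,1)$ (say $\epsilon=\tfrac12$) there is an $n_0$, depending on $\epsilon$ but crucially \emph{not} on $\Gamma$, such that $m_{1,t_n}(\Gamma^c)>\epsilon$ for all $n\ge n_0$. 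This is precisely the failure of tightness demanded in the statement.

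There is no real obstacle here; the whole argument rests on the observation that the contrived $0$-size orders (cf.\ \remref{no-0-size-order}) pile up large atomic masses on the diagonal at locations $\bigl(2^{(i-1)/2},2^{(i-1)/2}\bigr)$ that march off to infinity, while the total mass $m_{1,t_n}(\overline{\mathcal{R}})$ stays of order $1$ because $t_{n+1,1}\asymp 2^n$ is comparable to the number of orders placed through cycle $n$. The only mild subtlety is to state tightness with respect to compact subsets of $\mathcal{R}$ rather than $\overline{\mathcal{R}}$, since it is exactly the exclusion of the diagonal from $\mathcal{R}$ that forces the diagonal atoms into $\Gamma^c$.
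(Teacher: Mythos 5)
Your bookkeeping is correct and is essentially the paper's: the $0$-size orders create atoms of $m_{1,t}$ at the diagonal points $(2^{(i-1)/2},2^{(i-1)/2})$, and at $t=t_{n+1,1}$ the cumulative mass of these atoms is $(2^n-1)/t_{n+1,1}\to 1$. The defect is in how you convert this into non-tightness. The measures $m_{1,t}$ are defined on ${\cal B}(\overline{\cal R})$, so tightness has to be tested against compact subsets of $\overline{\cal R}$, which \emph{do} meet the diagonal; the paper's proof accordingly takes an arbitrary compact $\Gamma\subset\overline{\cal R}$. You instead restrict to compact $\Gamma\subset{\cal R}$ and obtain $m_{1,t_n}(\Gamma^c)\ge(2^n-1)/t_{n+1,1}$ purely from the fact that such a $\Gamma$ misses the diagonal. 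That argument never uses the one feature that matters, namely that the atom locations escape to infinity: it would equally ``prove'' non-tightness for a family putting mass $\frac{1}{2}$ at the single fixed point $(1,1)$ for every $t$, which is of course tight on $\overline{\cal R}$. So as written you have established a strictly weaker statement than the proposition, and your closing remark that the exclusion of the diagonal from ${\cal R}$ is what drives the result points at the wrong mechanism.

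The repair costs one line and uses only what you already computed: given any compact $\Gamma\subset\overline{\cal R}$, boundedness of $\Gamma$ yields an $N_0$ with $(2^{(i-1)/2},2^{(i-1)/2})\in\Gamma^c$ for all $i\ge N_0$, whence $m_{1,t_{n+1,1}}(\Gamma^c)\ge(2^n-2^{N_0-1})/t_{n+1,1}\to 1$. (The paper uses only the $2^{n-1}$ zero-size orders of cycle $n$ itself, getting a limit of $\frac{1}{2}$ and taking $\epsilon<\frac{1}{2}$.) This makes the threshold $n_0$ depend on $\Gamma$, which is harmless: non-tightness only requires that for each compact $\Gamma$ there be \emph{some} $t$ with $m_{1,t}(\Gamma^c)>\epsilon$, so the uniformity in $\Gamma$ you worked to secure was not the point.
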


\begin{proof}
Consider the measure $m_{1,t_{n+1,1}}$ for each $n\in \NN$.  We note that there are $2^{n-1}$ orders of size $0$ at time $t_{n,2^{n-1}+1}<t_{n+1,1}$ when the process $x$ takes value $2^{(n-1)/2}$.  Thus, 
$$m_{1,t_n}(\{(2^{(n-1)/2},2^{(n-1)/2})\}) = \frac{2^{n-1}}{2^n + \frac{2^{n/2}-1}{2^{1/2}-1} - 1} = \frac{1}{2}\cdot \frac{1}{1+\frac{2^{-(n/2)-1}-2^{-n}}{2^{1/2}-1} - 2^{-n}}.$$
Thus for any $\epsilon < \frac{1}{2}$, for any compact set $\Gamma \subset \overline{\cal R} = \{(y,z)\in \RR^2: y\leq z\}$, there is an $n$ sufficiently large so that $(2^{(n-1)/2},2^{(n-1)/2}) \in \Gamma^c$ and $m_{1,t_n}(\{(2^{(n-1)/2},2^{(n-1)/2})\}) > \epsilon$.
\end{proof}

\section{Drifted Brownian Motion Example}
The second example considers a stochastic variation on the deterministic demand model.  The inventory level $X_0$ {\em without orders}\/ is given by the drifted Brownian motion process with
\begin{equation} \label{dbm-dyn}
X_0(t) = W(t) - t, \qquad t\geq 0;
\end{equation}
note that the initial inventory level is $x_0=0$ and the drift and diffusion coefficients are $\mu=-1$ and $\sigma=1$.  The cost functions remain as 
$$c_0(x) = 2|x|, \quad \forall x \in \RR, \qquad \mbox{and}\qquad c_1(y,z) = k_1 + (z-y), \quad \forall (y,z) \in \overline{\cal R}.$$

The ordering policy $(\tau,Y)$ is essentially the same as in the deterministic example, though adapted to the stochatic environment.  The policy runs in cycles, each of which is composed of two phases.  For cycle $i=1,2,3,\ldots$, Phase 1 consists of using the $(0,1)$-ordering policy a total of $2^{i-1}$ times; the length of each sub-cycle is now a random variable having mean $1$.  Phase 2 again involves a single $(0,2^{(i-1)/2})$-ordering policy followed immediately by using the $(2^{(i-1)/2},2^{(i-1)/2})$-ordering policy $2^{i-1}$ times.

The formal description of the ordering policy is now given; as in the deterministic example, we employ a double subscript notation.  

\begin{defn}[{\em The Policy}\/ $(\tau,Y)$] \label{stoch-tauY-def}
For cycle $i=1$, define 
$$\left\{\begin{array}{rcl}
\tau_{1,1} &=& 0, \\ Y_{1,1} &=& 1, \end{array} \right. \qquad \mbox{and} \qquad 
\left\{\begin{array}{rcl}
\tau_{1,2} &=& \inf\{t\geq \tau_{1,1}: X(t) = 0\}, \\ Y_{1,2} &=& 1, \end{array} \right. \qquad 
\left\{\begin{array}{rcl}
\tau_{1,3} &=& \tau_{1,2}, \\ Y_{1,3} &=& 0, \end{array} \right.$$
and for cycle $i = 2,3,4,\ldots$, define the orders in Phase 1 to be 
$$\begin{array}{ll}
\left\{\begin{array}{rcl}
\tau_{i,1} &=& \inf\{t\geq \tau_{i-1,2^i+1}: X(t) = 0\}, \\ Y_{i,1} &=& 1, \end{array} \right. & \rule[-18pt]{0pt}{12pt} \\
\left\{\begin{array}{rcl} 
\tau_{i,j} &=& \inf\{t\geq \tau_{i,j-1}: X(t) = 0\}, \\ Y_{i,j} &=& 1, \end{array} \right. & \quad j=2, \ldots, {2^{i-1}}, \rule[-18pt]{0pt}{12pt} 
\end{array} $$
and the orders in Phase 2 by 
$$\begin{array}{ll}
\left\{\begin{array}{rcl}
\tau_{i,2^{i-1}+1} &=& \inf\{t\geq \tau_{i,2^{i-1}}: X(t) = 0\}, \\ Y_{i,2^{i-1}+1} &=& 2^{(i-1)/2}, \end{array} \right. &  \rule[-18pt]{0pt}{12pt} \\
\left\{\begin{array}{rcl}
\tau_{i,j} &=& \tau_{i,2^{i-1}+1} \\
Y_{i,j} &=& 0, \end{array}\right. & j = 2^{i-1}+2, \ldots, 2^i+1.
\end{array} $$
\end{defn}

The same observation about $0$-size orders as in \remref{no-0-size-order} holds for the stochastic model as well.  The main result of the paper can now be stated. 

\begin{thm}
For the drifted Brownian motion inventory model, let $(\tau,Y)$ be the ordering policy of \defref{stoch-tauY-def}, $X$ be the resulting inventory process, and $\{\mu_{0,t}\}$ and $\{\mu_{1,t}\}$, respectively, be the corresponding average expected occupation and ordering measures defined in \eqref{mus-t-def}.  Then 
\begin{description}
\item[(a)] $J_0(\tau,Y) < \infty$;
\item[(b)] $\{\mu_{0,t}: t > 0\}$ is tight as $t\rightarrow \infty$; and 
\item[(c)] $\{\mu_{1,t}: t > 0\}$ is not tight.
\end{description}
\end{thm}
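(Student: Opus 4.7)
The plan is to mirror the deterministic proof with three new stochastic ingredients: the strong Markov property at the successive hitting times of $\{0\}$ to obtain a genuine independent cycle decomposition; It\^o's formula with optional stopping to compute expected holding costs; and a Chebyshev/CLT concentration estimate for first-passage times to control which portion of cycle $n$ contains a chosen deterministic time $t_n$. Parts (a) and (c) require these tools, while part (b) follows from the cited \propref{mu0t-tight}.

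For part (a), let $L_i$ and $C_i$ denote the random length and total cost of cycle $i$. By the strong Markov property applied at $\tau_{i,1}$, the pairs $\{(L_i,C_i)\}_{i\geq 1}$ are independent. Each Phase~1 sub-cycle is the first-passage time to $0$ of the drifted Brownian motion started at $1$, which is inverse Gaussian with mean and variance both equal to $1$, so $\EE[L_i]=2^{i-1}+2^{(i-1)/2}$, matching the deterministic cycle length; the per-cycle ordering cost is the deterministic quantity $(2k_1+1)2^{i-1}+k_1+2^{(i-1)/2}$. For the expected holding cost I choose $g(x)=-x^2-x$, so that $\tfrac12 g''(x)-g'(x)=2x$; applying It\^o's formula with optional stopping at the first hitting time $\sigma_h$ of $0$ starting from $h>0$, and using $X\geq 0$ on $[0,\sigma_h]$, yields
\[
\EE\!\left[\int_0^{\sigma_h}2X(s)\,ds\right] = g(0)-g(h) = h^2+h.
\]
Summing over Phase~1 ($h=1$, $2^{i-1}$ copies) and Phase~2 ($h=2^{(i-1)/2}$) gives expected per-cycle holding cost $2^i+2^{i-1}+2^{(i-1)/2}$, so $\EE[C_i]=O(2^i)$ and $\EE[C_i]/\EE[L_i]$ is uniformly bounded. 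A cycle-counting argument analogous to the deterministic proof then yields $J_0(\tau,Y)<\infty$.

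For part (c), let $\xi_n=\tau_{n,2^{n-1}+1}$ be the random time at which the $2^{n-1}$ size-$0$ orders at the point $(2^{(n-1)/2},2^{(n-1)/2})\in\overline{\cal R}$ are placed in cycle $n$. For any deterministic $t>0$,
\[
\mu_{1,t}\bigl(\{(2^{(n-1)/2},2^{(n-1)/2})\}\bigr) = \frac{2^{n-1}}{t}\,\PP[\xi_n\leq t].
\]
Choose $t_n=\EE[\tau_{n+1,1}]\sim 2^n$. Since first-passage times from height $h$ of the drifted BM are inverse Gaussian with both mean and variance equal to $h$, one obtains $\mathrm{Var}(\xi_n)=\EE[\xi_n]$ and $t_n-\EE[\xi_n]=2^{(n-1)/2}$, of the same order as the standard deviation of $\xi_n$. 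A Chebyshev or CLT estimate gives $\liminf_n\PP[\xi_n\leq t_n]\geq c>0$ (replacing $t_n$ by $Ct_n$ for sufficiently large $C$ drives this probability to $1$), so
\[
\liminf_{n\to\infty}\mu_{1,t_n}\bigl(\{(2^{(n-1)/2},2^{(n-1)/2})\}\bigr) \geq \frac{c}{2} > 0,
\]
and since the diagonal points $(2^{(n-1)/2},2^{(n-1)/2})$ escape every compact subset of $\overline{\cal R}$, this establishes non-tightness.

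The main obstacle is the cycle-counting step in part (a), as the cycle boundaries are now random. The cleanest remedy uses independence of the $(L_i,C_i)$ to write $\EE[\text{cost up to }t]\leq\sum_i\EE[C_i]\,\PP[\tau_{i,1}\leq t]$ and then exploits the fact that $\PP[\tau_{i,1}\leq t]$ falls off sharply once $\EE[\tau_{i,1}]\sim 2^{i-1}$ exceeds $t$, again via Chebyshev on the hitting-time variances; the upper bound then has the same doubling structure as the deterministic sum and remains bounded uniformly in $t$. Part~(b) requires nothing beyond invoking \propref{mu0t-tight}.
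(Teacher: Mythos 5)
Your overall architecture (independent cycles via the strong Markov property, $\EE[T_h]=\mathrm{Var}(T_h)=h$ for the first-passage times, and the Dynkin/It\^o computation giving expected sub-cycle holding cost $h^2+h$, which matches the paper's $g_0(x)=x^2+x$) is sound, and your part (c) is a correct and genuinely different route from the paper's: you evaluate $\mu_{1,t}$ along the deterministic times $t_n=\EE[\tau_{n+1,1}]$ and need only a one-sided second-moment bound on $\xi_n$, whereas the paper lower-bounds $\mu_{1,t}(\Gamma^c)$ for \emph{all} large $t$ via the counting process $N(t)$, a strong law for $\sigma_n/n$, uniform integrability, and Jensen's inequality. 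Your version is more elementary and suffices for non-tightness, since only existence of one bad $t$ per compact set is required.

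The genuine gap is in your proposed repair of the cycle-counting step in part (a). You bound the expected cost up to time $t$ by $\sum_i \EE[C_i]\,\PP[\tau_{i,1}\leq t]$ (the identity itself is fine, by the strong Markov property), and then claim Chebyshev on $\mathrm{Var}(\tau_{i,1})$ controls the tail. It does not: $\EE[\tau_{i,1}]$ and $\mathrm{Var}(\tau_{i,1})$ are both of order $2^{i}$, so for $i$ with $2^{i-1}\geq 2t$ Chebyshev gives only $\PP[\tau_{i,1}\leq t]\leq \mathrm{Var}(\tau_{i,1})/(\EE[\tau_{i,1}]-t)^2 = O(2^{-i})$, while $\EE[C_i]=\Theta(2^i)$; each tail term is therefore $\Theta(1)$ and the series $\sum_{i}\EE[C_i]\PP[\tau_{i,1}\leq t]$ diverges. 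To make your route work you need exponential lower-tail bounds, e.g.\ a Chernoff estimate using the explicit Laplace transform $\EE[e^{-\lambda T_h}]=e^{-h(\sqrt{1+2\lambda}-1)}$, which gives $\PP[\tau_{i,1}\leq t]\leq e^{\lambda t-(\sqrt{1+2\lambda}-1)\EE[\tau_{i,1}]}$ and hence super-exponential decay once $2^{i-1}\gg t$. The paper avoids this issue entirely by going through a renewal-theoretic argument: a Kolmogorov strong law for the inter-order times yields $N(t)/t\to 1$ and $\EE[N(t)]/t\to 1$ (the latter via uniform integrability by comparison with the pure $(0,1)$ renewal process), and a Wald-type identity $\EE[\sum_{j=1}^{N(t)+1}L_j]=\sum_j\EE[L_j]\PP(N(t)\geq j-1)$ combined with a Ces\`aro bound on $\frac1n\sum_{j\le n}\EE[L_j]$ then gives the limit superior $\leq 3$ for the holding costs (and an analogous estimate for the ordering costs) without any tail estimate on $\tau_{i,1}$. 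Either fix is acceptable, but as written your part (a) does not close.
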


\begin{proof}
This theorem is proven in pieces.  \propref{lta-holding-costs} shows that the long-term average holding costs are bounded.  In light of this proposition, \propref{mu0t-tight} then establishes the tightness of the average expected occupation measures $\{\mu_{0,t}\}$ as $t\rightarrow \infty$.  Finally, \propref{ordering-measure-results} shows both that the long-term average ordering costs are finite and that the average expected ordering measures $\{\mu_{1,t}\}$ are not tight.
\end{proof}

Our analysis depends on a careful construction of the inventory process $X$ under the ordering policy $(\tau,Y)$ of \defref{stoch-tauY-def}. Independent copies of the diffusion $X_0$ of \eqref{dbm-dyn} are pieced together at the jump times (see, e.g., the appendix of \cite{chri:14} for such a construction) with the implication that the various ordering sub-cycles are independent.  

The initial analysis examines the long-term average cost of the $(\tau,Y)$ policy.  It begins by focusing on the holding costs.  Notice that the only orders which affect the length of cycle $i$ are the $2^{i-1}$ times that the $(0,1)$ policy is used and the one time that the $(0,2^{(i-1)/2})$ policy occurs; the $2^{i-1}$ times that orders of size $0$ are placed do not change the state of the inventory or lengthen the sub-cycles so have no affect on the holding costs.  Thus it is sufficient to restrict the analysis solely to the non-zero orders.

Let ${\cal S}:=\{\sigma_1, \sigma_2,\sigma_3, \ldots\}$ denote the times of the non-zero orders.  Let $i\in \NN$ denote the cycle and $j\in \{1,2,\ldots,2^{i-1}+1\}$ be the number of the non-zero order within cycle $i$.  Then observe that $\sigma_n = \tau_{i,j}$ where $n=2^{i-1}+i+j-2$.  The ensuing computations are simplified by a shift in the index for $j$.  For $i\geq 2$, define $\tau_{i,0} = \tau_{i-1,2^{i-1}+1}$ so that order number ``zero'' of the $i^{\mbox{\footnotesize th}}$ cycle is the last non-zero order, in fact the large order, of cycle $(i-1)$.  

Our first result gives a strong law of large numbers result for the cycle lengths.

\begin{prop} \label{slln}
Let $(\tau,Y)$ be the ordering policy of \defref{stoch-tauY-def} and ${\cal S}$ be the times of the non-zero orders.  Then
\begin{equation} \label{slln-sigma-n}
\lim_{n\rightarrow \infty} \frac{\sigma_n}{n} = 1 \; (a.s.).
\end{equation}
\end{prop}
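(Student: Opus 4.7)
The plan is to decompose $\sigma_n$ into a sum of ``unit'' Phase 1 hitting times (i.i.d.\ with mean $1$) whose contribution carries the linear growth and a sum of ``large'' Phase 2 hitting times whose contribution is $o(n)$ almost surely. Let $i(n)$ be the cycle containing the $n$-th non-zero order and $j(n)\in\{1,\ldots,2^{i(n)-1}+1\}$ its position within that cycle. From the piecewise construction of $X$ using independent copies of $X_0$ and the strong Markov property at the order times,
\begin{equation*}
\sigma_n \;=\; A_n + B_n, \qquad A_n \;:=\; \sum_{k=1}^{i(n)-1}\sum_{\ell=1}^{2^{k-1}} T_{k,\ell} \;+\; \sum_{\ell=1}^{j(n)-1} T_{i(n),\ell}, \qquad B_n \;:=\; \sum_{k=1}^{i(n)-1} T^{(k)},
\end{equation*}
where the $T_{k,\ell}$ are i.i.d.\ hitting times of $0$ by drifted Brownian motion started at $1$ (each with mean $1$ and variance $1$), and $T^{(k)}$ is an independent hitting time of $0$ started at $2^{(k-1)/2}$ (inverse Gaussian with mean and variance both $2^{(k-1)/2}$). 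A direct count shows that $A_n$ is a sum of $M_n := n-i(n)$ i.i.d.\ terms, and since $i(n)=O(\log n)$ we have $M_n/n\to 1$.

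I would first dispatch the Phase 1 piece. Writing $A_n = S_{M_n}$ where $S_m$ denotes the partial sum of the first $m$ time-ordered Phase 1 hitting times, Kolmogorov's strong law gives $S_m/m\to 1$ a.s., and hence $A_n/n = (S_{M_n}/M_n)(M_n/n)\to 1$ a.s.

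For the Phase 2 piece, the key observation is that $B_n$ is piecewise constant in $n$: on each interval $N_{i-1} < n \leq N_i$ we have $B_n = R_i := \sum_{k=1}^{i-1} T^{(k)}$ while $n \geq 2^{i-1}$, so it suffices to prove $R_i/2^{i-1}\to 0$ a.s. By independence,
\begin{equation*}
\EE[R_i] \;=\; \mathrm{Var}(R_i) \;=\; \sum_{k=1}^{i-1} 2^{(k-1)/2} \;=\; O(2^{i/2}).
\end{equation*}
Chebyshev's inequality then gives
\begin{equation*}
\PP\bigl(|R_i-\EE[R_i]| > \epsilon\, 2^{i-1}\bigr) \;\leq\; \frac{O(2^{i/2})}{\epsilon^2\, 4^{i-1}} \;=\; O(2^{-3i/2}),
\end{equation*}
which is summable in $i$. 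Borel--Cantelli together with $\EE[R_i]/2^{i-1} = O(2^{-i/2})\to 0$ yields $R_i/2^{i-1}\to 0$ almost surely, and combining with the Phase 1 limit gives $\sigma_n/n\to 1$ a.s.

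The main obstacle is not the i.i.d.\ portion, which is handled by the ordinary SLLN, but the Phase 2 sum, since the individual means $\EE[T^{(k)}]$ grow geometrically in $k$ and so cannot be absorbed into any i.i.d.\ average. The argument succeeds because (i) there are only $O(\log n)$ Phase 2 contributions by the end of cycle $i(n)$, (ii) their aggregate mean and variance are only $O(2^{i(n)/2}) = O(\sqrt{n})$, and (iii) $B_n$ is constant on each cycle's range of $n$, allowing a Chebyshev/Borel--Cantelli bound indexed by $i$ to be promoted directly to a statement for $n$.
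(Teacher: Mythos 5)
Your proof is correct, but it takes a genuinely different route from the paper's. The paper treats the full sequence of inter-order times $\beta_k=\sigma_{k+1}-\sigma_k$ as independent but non-identically distributed random variables, verifies the Kolmogorov criterion $\sum_n \mathrm{Var}(\beta_n)/n^2<\infty$, invokes Kolmogorov's strong law for independent non-i.i.d.\ summands to get $\frac{1}{n}\sum_k\beta_k-\overline{\mu}_n\to 0$ a.s., and then must separately carry out the explicit computation showing that the Ces\`aro mean of the expectations $\overline{\mu}_n$ converges to $1$. You instead split $\sigma_n=A_n+B_n$, dispatch the i.i.d.\ unit sub-cycles $A_n$ with the ordinary strong law (using that the index count $M_n=n-i(n)$ is deterministic with $M_n/n\to 1$), and kill the Phase 2 contribution $B_n$ by exploiting that it is constant over each cycle's range of $n$ and bounded by $R_{i}$ with mean and variance $O(2^{i/2})$, so a Chebyshev--Borel--Cantelli argument indexed by the cycle $i$ gives $B_n/n\to 0$ a.s.\ (indeed Markov's inequality on the nonnegative $R_i$ would already suffice). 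Your decomposition is more elementary --- it avoids the non-i.i.d.\ strong law entirely and makes transparent \emph{why} the limit is $1$, namely that the large orders are too sparse ($O(\log n)$ of them, with aggregate mean $O(\sqrt{n})$) to affect the linear growth --- at the cost of some bookkeeping with the deterministic index maps $i(n)$, $j(n)$. The paper's approach is more uniform and would extend with no change if the unit sub-cycles themselves were not identically distributed, and its computation of $\overline{\mu}_n$ is reused almost verbatim in the proof of Proposition \ref{cycle-cost-asymptotics}, which is a structural economy your argument does not provide.
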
 

\begin{proof}
Define the independent (but not identically distributed) random variables 
$$\beta_0 = \sigma_1 = 0, \quad \mbox{and} \quad \beta_k = \sigma_{k+1} - \sigma_k, \qquad k\in \NN;$$  
thus $\beta_k$ gives the random length of time of the $k^{\mbox{\footnotesize th}}$ inter-order interval.  Then 
$$\sigma_n = \sum_{k=0}^{n-1} \beta_k, \qquad n \in \NN.$$
Thus \eqref{slln-sigma-n} has the representation
\begin{equation} \label{representation}
\lim_{n\rightarrow \infty} \mbox{$\frac{1}{n}$} \sum_{k=0}^{n-1} \beta_k = 1,\; (a.s.).
\end{equation}

Notice that, apart from $\beta_0=0$, $\beta_n$ is either the length of a $(0,1)$ sub-cycle or a $(0,2^{(i-1)/2})$ sub-cycle.  More precisely, for each $i\ge 1$, $\beta_{2^{i}+i-1} $ is the length of the cycle arising from the $(0,2^{(i-1)/2})$ sub-cycle and for $n\neq 2^i-1+i$, $\beta_n$ is the length of a $(0,1)$ sub-cycle.  Using the Laplace transform of the hitting time of a drifted Brownian motion process (see Formula 2.0.1 (p.\;295) of \cite{boro:02}), one can determine that 
$$\begin{array}{lclclcll} 
\EE[\beta_{2^{i-1}+i+j-2}]&=& 1, & \mbox{and} & \mbox{Var}(\beta_{2^{i-1}+i+j-2})&=&1, & \; j=1,2,\ldots, 2^{i-1},\/ i\ge 1, \\
\EE[\beta_{2^{i}+i-1}]&=& 2^{(i-1)/2}, & \mbox{and} & \mbox{Var}(\beta_{2^{i}+i-1})&=& 2^{(i-1)/2}, & \;  i\geq 1.
\end{array}$$
Next observe 
\begin{eqnarray*}
\sum_{n=2}^\infty \frac{\mbox{Var}(\beta_n)}{n^2} &=& 
\sum_{i=2}^\infty \sum_{j=0}^{2^{i-1}}\frac{\mbox{Var}(\beta_{2^{i-1}+i+j-2})}{(2^{i-1}+i+j-2)^2} \\
&\leq& \sum_{i=1}^\infty \sum_{j=1}^{2^{i-1}} \frac{1}{(2^{i-1}+i+j-2)^2} + \sum_{i=1}^\infty \frac{2^{(i-1)/2}}{(2^{i-1}+i-1)^2} \\
&\leq& \sum_{n=1}^\infty \frac{1}{n^2} + \sum_{i=1}^\infty \frac{1}{2^{3(i-1)/2}} < \infty.
\end{eqnarray*} 
By Kolmogorov's Strong Law of Large Numbers (cf.\ Theorem 2 (p.\;389) of \cite{shir:96}), it follows that
$$\lim_{n\rightarrow \infty} \left(\mbox{$\frac{1}{n}$} \sum_{k=1}^n \beta_k - \overline{\mu}_n\right) = 0, \;(a.s.),$$
in which $\overline{\mu}_n = \frac{1}{n} \sum_{k=1}^n \EE[\beta_k]$.  Since $\beta_0=0$, notice that 
$$\mbox{$\frac{1}{n}$} \sum_{k=1}^n \beta_k = \mbox{$\frac{n+1}{n}$} \cdot \left(\mbox{$\frac{1}{n+1}$} \sum_{k=0}^n \beta_k\right)$$
so \eqref{representation} holds if $\overline{\mu}_n \rightarrow 1$ as $n\rightarrow \infty$.

We now analyze the convergence of $\overline{\mu}_n$.  Again for $n\geq 2$, write $n=2^{i-1}+i+j-2$ with $i \ge 2$ and $0\leq j \leq 2^{i-1}$.  Notice that for $1 \leq k \leq i-1$, cycle $k$ contains $2^{k-1}$ sub-cycles generated by $(0,1)$ ordering policies having a mean length of $1$ and a single $(0,2^{(k-1)/2})$ sub-cycle with mean length $2^{(k-1)/2}$ while the partial cycle $i$ has $j$ sub-cycles from $(0,1)$, 
Thus for $n \geq 2$, 
\begin{align*}
\mbox{$\frac{1}{n}$} \sum_{k=1}^n \EE[\beta_k] &
= \frac{1}{2^{i-1}+i+j-2} \left(\sum_{k=1}^{i-1} (2^{k-1} + 2^{(k-1)/2}) + j\right) \\
&= \frac{1}{2^{i-1}+i+j-2} \left(2^{i-1} - 1 +\frac{2^{(i-1)/2}-1}{2^{1/2}-1} + j\right) \\
& = \frac{1 -\frac{1}{2^{i-1}+j} + \frac{1}{2^{1/2}-1} \cdot\frac{2^{(i-1)/2}-1}{2^{i-1}+j} }{1 + \frac{i-2}{2^{i-1}+j}}.
\end{align*} 
Obviously we have $$\lim_{i\to\infty}\frac{1}{2^{i-1}+j}=\lim_{i\to\infty}\frac{2^{(i-1)/2}-1}{2^{i-1}+j} =\lim_{i\to\infty} \frac{i-2}{2^{i-1}+j} =0 \text{ for each }j =0, 1, \dots, 2^{i-1}.$$ 
Therefore it follows that as $n\to\infty$ (and hence   $i\rightarrow \infty$),  $\frac{1}{n} \sum_{k=1}^n \EE[\beta_k] $ converges to 1.
\end{proof}

We now establish a variant of the elementary renewal theorem; the fact that the cycles are not identically distributed means that we cannot simply apply the theorem.

\begin{prop} \label{elementary-renewal}
For $t \geq 0$, let $\displaystyle = \sum_{i=1}^\infty I_{\{\sigma_i \leq t\}} = \max\{n: \sigma_n \leq t\}$ be  the number of orders of positive size by time $t$.  Then 
\begin{description}
\item[(a)] $\displaystyle \lim_{t\rightarrow \infty} \frac{N(t)}{t} = 1, \; (a.s.)$; and
\item[(b)] $\displaystyle \lim_{t\rightarrow \infty} \frac{\EE[N(t)]}{t} = 1$.
\end{description}
\end{prop}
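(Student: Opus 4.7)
The plan is to handle parts (a) and (b) in sequence, with (a) being a direct application of \propref{slln} and (b) requiring additional truncation machinery.

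For part (a), I would use the standard sandwich argument from elementary renewal theory. By definition, $\sigma_{N(t)} \leq t < \sigma_{N(t)+1}$ almost surely. I first need $N(t) \to \infty$ a.s.\ as $t \to \infty$: \propref{slln} gives $\sigma_n \to \infty$ a.s., so on the a.s.\ set where this holds, for every $M$ we have $N(t) \geq M+1$ once $t > \sigma_{M+1}$. Dividing the sandwich through by $N(t)$,
\[
\frac{\sigma_{N(t)}}{N(t)} \;\leq\; \frac{t}{N(t)} \;<\; \frac{\sigma_{N(t)+1}}{N(t)+1}\cdot\frac{N(t)+1}{N(t)}.
\]
Both $\sigma_{N(t)}/N(t)$ and $\sigma_{N(t)+1}/(N(t)+1)$ converge a.s.\ to $1$ by \propref{slln} along the random subsequence $N(t) \to \infty$, and the remaining factor tends to $1$. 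Hence $t/N(t) \to 1$, equivalently $N(t)/t \to 1$, almost surely.

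For part (b), I would prove matching bounds. Fatou's lemma combined with (a) immediately gives
\[
\liminf_{t\to\infty} \frac{\EE[N(t)]}{t} \;\geq\; \EE\!\left[\liminf_{t\to\infty} \frac{N(t)}{t}\right] \;=\; 1.
\]
The upper bound $\limsup_{t\to\infty} \EE[N(t)]/t \leq 1$ is more delicate because the $\beta_k$ are independent but not identically distributed, so the classical Wald-identity proof of the elementary renewal theorem does not apply directly. My approach is the standard truncation trick. Fix $c > 0$, set $\beta_k^c := \beta_k \wedge c$, and let $\sigma_n^c := \sum_{k=0}^{n-1} \beta_k^c$ and $N^c(t) := \max\{n : \sigma_n^c \leq t\}$. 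Since $\sigma_n^c \leq \sigma_n$ we have $N(t) \leq N^c(t)$, and since the overshoot $\beta_{N^c(t)}^c$ is bounded by $c$ we have $\sigma_{N^c(t)+1}^c \leq t + c$. The event $\{N^c(t) \geq k\} = \{\sigma_k^c \leq t\}$ is measurable with respect to $\sigma(\beta_0^c,\ldots,\beta_{k-1}^c)$ and hence independent of $\beta_k^c$, so Tonelli yields the Wald-type identity
\[
\EE[\sigma_{N^c(t)+1}^c] \;=\; \sum_{k=1}^\infty \EE[\beta_k^c]\,\PP(N^c(t) \geq k) \;\leq\; t + c.
\]

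The crucial bookkeeping step is the uniform lower bound $\EE[\beta_k^c] \geq \mu_c := \EE[\beta_1 \wedge c]$ for every $k \geq 1$. For regular indices this is equality since those $\beta_k$ are distributed as $\beta_1$. For the special indices $k = 2^{i}+i-1$, the variable $\beta_k$ is the hitting time of $0$ by $W_t-t$ started from $2^{(i-1)/2} \geq 1$, which stochastically dominates $\beta_1$ by the strong Markov property, and this domination survives the nondecreasing truncation $x \mapsto x \wedge c$. Substituting in gives $\mu_c\,\EE[N(t)] \leq \mu_c\,\EE[N^c(t)] \leq t + c$, so $\limsup_{t\to\infty} \EE[N(t)]/t \leq 1/\mu_c$. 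Letting $c \to \infty$, monotone convergence yields $\mu_c \uparrow \EE[\beta_1] = 1$, producing the desired bound. The principal obstacle throughout is the non-identical distributions of the $\beta_k$; the truncation combined with stochastic monotonicity of hitting times is what allows the classical elementary renewal proof to be adapted to the present setting.
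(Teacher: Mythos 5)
Your proof of part (a) is the same sandwich argument the paper uses, so there is nothing to compare there. For part (b) you take a genuinely different route, and it is correct. The paper proves $L^1$ convergence by establishing uniform integrability of $\{N(t)/t : t \geq 1\}$: it dominates $N(t)$ pathwise by the renewal process $\wdt N(t)$ of the pure $(0,1)$ policy and cites a standard second-moment bound $\sup_{t\geq 1}\EE[(\wdt N(t)/t)^2] < \infty$ for i.i.d.\ renewal processes; uniform integrability plus the a.s.\ limit from (a) then gives (b). You instead prove the two inequalities separately: Fatou for the lower bound, and for the upper bound an adaptation of the classical truncation proof of the elementary renewal theorem, via the Wald-type identity for the truncated inter-order times together with the observation that every $\beta_k$ with $k\geq 1$ stochastically dominates $\beta_1$ (the large sub-cycles start from $2^{(i-1)/2}\geq 1$ and must pass through level $1$ before hitting $0$), so that $\EE[\beta_k\wedge c]\geq \EE[\beta_1\wedge c]$ uniformly in $k$. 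Both arguments ultimately rest on the same comparison with the homogeneous $(0,1)$ renewal structure --- the paper exploits it as a pathwise domination $N(t)\leq \wdt N(t)$, you as a stochastic domination of the increments. Your route is more elementary in that it needs only first moments rather than a second-moment renewal estimate, at the cost of the truncation bookkeeping; the paper's route delivers the stronger byproduct of uniform integrability (indeed a uniform $L^2$ bound). The one detail you leave implicit is that $N^c(t)<\infty$ a.s.\ (equivalently $\sigma_n^c\to\infty$), which is needed before writing $\sigma^c_{N^c(t)+1}\leq t+c$; it follows from the strong law applied to the infinitely many i.i.d.\ truncated $(0,1)$ sub-cycle lengths, so the gap is cosmetic.
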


\begin{proof}
First by the definition of $N(t)$, $\sigma_{N(t)} \leq t < \sigma_{N(t)+1}$ for each $t \geq 0$.  Thus for each $t \geq 0$,
$$\frac{\sigma_{N(t)}}{N(t)} \leq \frac{t}{N(t)} < \frac{\sigma_{N(t)+1}}{N(t)} = \frac{N(t)+1}{N(t)} \cdot \frac{\sigma_{N(t)+1}}{N(t)+1}.$$
As $t\rightarrow \infty$, \propref{slln} implies first that $N(t) \rightarrow \infty\; (a.s.)$ and then establishes (a).

In order to prove (b), it is necessary to show that the collection $\{\frac{N(t)}{t}: t\geq 1\}$ is uniformly integrable.  By Lemma~3 (p.\ 190) of \cite{shir:96}, it suffices to show that $\sup_{t\geq 1} \EE[(\frac{N(t)}{t})^2] < \infty$.  To establish this result, we compare the counting process $N$ with a similar renewal process $\wdt N$ connected to a different policy.  

Define the ordering policy $(\wdt\tau,\wdt Y)$ which always uses the $(0,1)$ policy and let $\{\wdt{\sigma}_n: n\in\NN\}$ denote the ordering times.  Define the renewal process $\wdt N$ by  
$$\wdt N(t) = \max\{n: \wdt\sigma_n \leq t\} = \sum_{i=1}^\infty I_{\{\wdt\sigma_i \leq t\}}.$$  
It then follows that $N(t) \leq \wdt N(t)$ for each $t \geq 0$ and hence
$$\EE\left[\left(\frac{N(t)}{t}\right)^2\right] \leq \EE\left[\left(\frac{\wdt N(t)}{t}\right)^2\right].$$
Using a standard renewal argument (see, e.g., the proof of Theorem~5.5.2 (pp.\;143,144) of \cite{chun:01}), it follows that the right-hand side is uniformly bounded for $t \geq 1$, establishing the uniform integrability of $\{\frac{N(t)}{t}: t \geq 1\}$ and hence (b).  
\end{proof}

The next step on the way to showing $J_0(\tau,Y) < \infty$ is to analyze the holding costs over a single cycle.  Observe that $X(\sigma_i)$ is either $1$ or $2^{(k-1)/2}$ for some $k$; to simplify notation, let $z$ represent either value.  Next, $X(t) = z - (t-\sigma_i) + W(t-\sigma_i)$ for $t \in [\sigma_i,\sigma_{i+1})$ since no orders are placed on the interval $(\sigma_i,\sigma_{i+1})$ and, by the definition of $\sigma_{i+1}$, $z - (\sigma_{i+1}-\sigma_i) + W(\sigma_{i+1}-\sigma_i) = 0$.  Again to simplify notation, make the change of time $s=t-\sigma_i$ and define $\tau=\sigma_{i+1}-\sigma_i$.  Thus, $X$ satisfies $X(s) = z - s + W(s)$ for $s\in [0,\tau)$.  


Define 
\begin{equation} \label{cycle-holding-cost}
L_\tau = \int_0^\tau c_0(X(s))\, ds.
\end{equation}
Then by Proposition~2.6 of \cite{helm:16}, it follows that $\EE[L_\tau] = z^2 + z$.  In applying this result, one uses the fact that the function $g_0$ of that paper is computed to be $g_0(x) = x^2 + x$ for this specific example.

We now establish a result similar to the law of large numbers for the holding costs.  

\begin{prop} \label{cycle-cost-asymptotics}
Let $(\tau,Y)$ be given by \defref{stoch-tauY-def} for the drifted Brownian motion process, $X$ be the inventory process and denote the non-zero ordering times by the collection ${\cal S}$.  Then
\begin{equation}
\label{eq-limsup-holding-cost}
 \limsup_{n\rightarrow \infty} \mbox{$\frac{1}{n}$} \sum_{k=1}^{n} \EE\left[\int_{\sigma_{k}}^{\sigma_{k+1}} c_0(X(s))\, ds\right] \le  3.
\end{equation}
\end{prop}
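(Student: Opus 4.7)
The plan is to combine the explicit closed-form expected holding cost per sub-cycle with the exponential structure of the cycles, and then analyze the partial sums case-by-case.

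First, I would apply the formula $\EE[L_\tau] = z^2 + z$ from the discussion around \eqref{cycle-holding-cost} to each sub-cycle $\beta_k$. Since the post-order inventory level is either $z = 1$ (for each $(0,1)$ sub-cycle) or $z = 2^{(i-1)/2}$ (for the single large sub-cycle in cycle $i$), a $(0,1)$ sub-cycle contributes $2$ in expectation and the large sub-cycle of cycle $i$ contributes $2^{i-1} + 2^{(i-1)/2}$. Aggregating over cycle $i$, which contains $2^{i-1}$ small sub-cycles and one large one, the expected holding cost generated in cycle $i$ is $C_i = 3 \cdot 2^{i-1} + 2^{(i-1)/2}$, while the number of non-zero orders it contributes is $2^{i-1}+1$.

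Next, I would introduce the cumulative quantities $A_{i+1} := \sum_{k=1}^{i} C_k = 3(2^i-1) + \frac{2^{i/2}-1}{2^{1/2}-1}$ and $N_i := \sum_{k=1}^{i} (2^{k-1}+1) = 2^i+i-1$, representing the expected holding cost and the number of non-zero orders accumulated through the end of cycle $i$. For any $n$, let $i(n)$ denote the cycle containing the $n^{\text{th}}$ non-zero order, so $N_{i(n)-1} < n \leq N_{i(n)}$, and write $n = N_{i(n)-1} + j$ with $1 \leq j \leq 2^{i(n)-1}+1$. The partial sum $S_n := \sum_{k=1}^{n} \EE[L_{\beta_k}]$ is then $A_{i(n)} + 2j$ when $1 \leq j \leq 2^{i(n)-1}$ (the large order of cycle $i(n)$ has not yet occurred) and $A_{i(n)+1}$ when $j = 2^{i(n)-1}+1$ (the cycle has just completed).

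To finish, I would bound $S_n/n$ in each regime. When $j = 2^{i(n)-1}+1$, the ratio equals $A_{i(n)+1}/N_{i(n)}$, which tends to $3$ as $i(n) \to \infty$. When $1 \leq j \leq 2^{i(n)-1}$, the function $j \mapsto (A_{i(n)}+2j)/(N_{i(n)-1}+j)$ is monotone in $j$ with derivative sharing the sign of $2 N_{i(n)-1} - A_{i(n)}$; since $A_{i(n)}/N_{i(n)-1} \to 3 > 2$, this derivative is eventually negative, so the ratio is maximized at $j = 1$ and is bounded by $(A_{i(n)}+2)/(N_{i(n)-1}+1)$, which also tends to $3$. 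Combining the two cases yields \eqref{eq-limsup-holding-cost}.

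The main obstacle will be avoiding crude estimates: the naive pair $S_n \leq A_{i(n)+1}$ together with $n \geq N_{i(n)-1}$ gives only the loose constant $6$, because consecutive cycles grow by a factor of $2$. Obtaining the sharp value $3$ requires the sharper case split above and the observation that the worst-case ratio occurs precisely at $n = N_i$, right after the large order of cycle $i$ has been placed.
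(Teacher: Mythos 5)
Your proposal is correct and follows essentially the same route as the paper: both use $\EE[L_\tau]=z^2+z$ to get expected cost $2$ per $(0,1)$ sub-cycle and $2^{i-1}+2^{(i-1)/2}$ for the large sub-cycle of cycle $i$, then write $n$ as a completed-cycles-plus-partial-cycle index and show the ratio of cumulative expected cost to $n$ tends to $3$. The only cosmetic difference is how the partial-cycle index $j$ is handled (the paper bounds $2j\le 3j$ and shows the resulting ratio converges to $3$ uniformly in $j$, whereas you use monotonicity in $j$ to locate the worst case); both work.
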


\begin{proof} For simplicity of notation, denote $L_{k} : = \int_{\sigma_{k}}^{\sigma_{k+1}} c_0(X(s))\, ds $ for $k =1, 2, \dots$
As in the proof of \propref{slln}, write the index $n$ as $n=2^{i-1}+i+j-2$ in which $i=2,3,\ldots$ and $j=0,1,\ldots,2^{i-1}$.  Recall, the large orders have indices with $j=0$ and hence $n = 2^{i-1} + i -2 $ for $i \ge 2$ so the formula for $\EE[L_\tau]$ above establishes that 
$$\begin{array}{rccl}
\EE[L_{2^{i-1}+i+j-2}] &=& 2, & \quad j=1,\ldots,2^{i-1},\; i \in \NN, \\
\EE[L_{2^{i}+i-1}] &=& 2^{i-1} + 2^{(i-1)/2}, & \quad  i\in \NN.
\end{array}$$
Consequently, for $n\geq 2$, 
\begin{align*}
 \mbox{$\frac1n$} \sum_{\ell=1}^{n} \EE[L_{\ell}] 
 &  =  \frac{1}{2^{i-1}+i+j-2}   \left(\sum_{k=1}^{i-1} (2^{k-1}\cdot 2 + 2^{k-1} + 2^{(k-1)/2}) +j \cdot 2\right)  \\
 & =  \frac{1}{2^{i-1}+i+j-2} \left(3(2^{i-1} -1) +  \frac{2^{(i-1)/2}-1}{2^{1/2} -1} + 2 j \right) \\
 & \le   \frac{3 ( 2^{i-1}  +j ) + \frac{2^{(i-1)/2}-1}{2^{1/2} -1}-3}{2^{i-1}+i+j-2} \\
 & = \frac{3 + \frac{1}{2^{1/2}-1} \cdot\frac{2^{(i-1)/2}-1}{2^{i-1}+ j} - \frac{3}{2^{i-1}+ j}} {1 + \frac{i-2}{ 2^{i-1} + j}}.
\end{align*} Using similar computations as those in the end of   the proof of Proposition \ref{slln}, we see immediately that this ratio converges to 3 as $n\to \infty $ and hence $i \to \infty$. This gives \eqref{eq-limsup-holding-cost} as desired. 
\end{proof}

We now parlay the asymptotics relative to cycles to verify that the long-term average holding costs related to $(\tau,Y)$ are finite.
\begin{prop} \label{lta-holding-costs}
Let $(\tau,Y)$ be given by \defref{stoch-tauY-def} for the drifted Brownian motion process having underlying diffusion given by \eqref{dbm-dyn} and let $X$ satisfy \eqref{controlled-dyn} .  Then
$$\limsup_{t\rightarrow \infty} \mbox{$\frac{1}{t}$} \EE\left[\int_0^t c_0(X(s))\, ds\right] \leq 3.$$
\end{prop}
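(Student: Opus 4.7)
The plan is to pass from the long-term average holding cost to the cycle-average quantity $\frac{1}{n}\sum_{k=1}^n \EE[L_k]$ that is already controlled by \propref{cycle-cost-asymptotics}, using the renewal estimate $\EE[N(t)]/t \to 1$ from \propref{elementary-renewal}(b) to convert from cycle counts back to clock time. The bridge between these two ingredients is a Wald-type identity that rests on the independence of the sub-cycles.

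First I would record the pathwise domination
$$\int_0^t c_0(X(s))\, ds \;\le\; \int_0^{\sigma_{N(t)+1}} c_0(X(s))\, ds \;=\; \sum_{k=1}^{N(t)} L_k,$$
which is immediate from $c_0\geq 0$ and the inclusion $[0,t]\subseteq [0,\sigma_{N(t)+1}]$. This reduces the claim to proving $\limsup_{t\to\infty} \tfrac{1}{t}\,\EE\bigl[\sum_{k=1}^{N(t)} L_k\bigr] \le 3$.

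The central step, and the one that needs care, is a Wald-type identification. By the piecewise construction of $X$ from independent copies of $X_0$ recalled just before \propref{slln}, the random variable $L_k$ depends only on the $k^{\mbox{\footnotesize th}}$ sub-cycle, while $\{N(t)\ge k\}=\{\sigma_k \le t\}$ is determined by $\beta_0,\ldots,\beta_{k-1}$ and hence by sub-cycles $1,\ldots,k-1$. Consequently $L_k$ and $\mathbf{1}_{\{N(t)\ge k\}}$ are independent, and Tonelli's theorem yields
$$\EE\Bigl[\sum_{k=1}^{N(t)} L_k\Bigr] \;=\; \sum_{k=1}^{\infty} \EE[L_k]\,\PP(N(t)\ge k) \;=\; \EE[A_{N(t)}],$$
where $A_n := \sum_{k=1}^n \EE[L_k]$ is a deterministic sequence.

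Finally, \propref{cycle-cost-asymptotics} supplies $\limsup_n A_n/n \le 3$, so for each $\epsilon>0$ one can fix $K_0$ with $A_n \le (3+\epsilon)\,n + A_{K_0}$ for every $n$. Applying this with $n=N(t)$, taking expectations, dividing by $t$, and invoking \propref{elementary-renewal}(b) gives $\limsup_{t\to\infty} \tfrac{1}{t}\,\EE[A_{N(t)}] \le 3+\epsilon$; letting $\epsilon\downarrow 0$ closes the argument. The only real obstacle is the independence step used to obtain the Wald-type identity: it is clean once one commits to the explicit sub-cycle-by-sub-cycle construction of $X$, but without that construction one would have to introduce a stopping-time or filtration argument to justify decoupling $L_k$ from the counting process.
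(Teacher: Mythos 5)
Your proposal is correct and follows essentially the same route as the paper's proof: dominate $\int_0^t c_0(X(s))\,ds$ by the sum of complete-cycle costs $L_k$ up to the $(N(t)+1)^{\mbox{\footnotesize st}}$ non-zero order time, decouple $L_k$ from the counting event via the independence of sub-cycles to get a Wald-type identity, and then combine the Ces\`aro bound of \propref{cycle-cost-asymptotics} with $\EE[N(t)]/t \to 1$ from \propref{elementary-renewal}. The only (immaterial) differences are that the paper pads the sum to $N(t)+1$ terms and pairs $L_j$ with $\{N(t)\ge j-1\}$, while you use the tighter $N(t)$-term sum paired with $\{N(t)\ge k\}$, and you package the truncation step as the uniform bound $A_n\le(3+\epsilon)n+A_{K_0}$ rather than splitting the double sum at $K_0$.
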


\begin{proof}
Again, denote the non-zero ordering times by ${\cal S}$ and observe that the orders of size $0$ do not affect the cycle lengths.  Again, for each $t \geq 0$, recall $N(t) = \max\{n: \sigma_n \leq t\}$ so that $\sigma_{N(t)} \leq t < \sigma_{N(t)+1}$.  Thus for positive $t$,
\begin{eqnarray*}
\mbox{$\frac{1}{t}$} \int_0^t c_0(X(s))\, ds &\leq& \mbox{$\frac{1}{t}$} \int_0^{\sigma_{N(t)+1}} c_0(X(s))\, ds  
 \leq  \mbox{$\frac{1}{t}$} \int_0^{\sigma_{N(t)+2}} c_0(X(s))\, ds =  \mbox{$\frac{1}{t}$} \sum_{j=1}^{N(t)+1} L_{j}, 
\end{eqnarray*}
where as in the proof of \propref{cycle-cost-asymptotics}, we employed the notation $L_j = \int_{\sigma_j}^{\sigma_{j+1}} c_0(X(s))\, ds$ for each $j \in \NN$.  Noting that $\{N(t)=0\} = \emptyset$ for each $t$, consider the expectation of the right-hand term above:
\begin{eqnarray} \label{approx-wald} \nonumber
\EE\left[\sum_{j=1}^{N(t)+1} L_j\right] &=& \EE\left[\sum_{k=1}^\infty I_{\{N(t)=k\}}\sum_{j=1}^{k+1} L_j\right] \\
&=& \EE\left[\sum_{j=1}^\infty L_j\sum_{k=j-1}^\infty I_{\{N(t)=k\}}\right] = \sum_{j=1}^\infty \EE\left[L_j I_{\{N(t) \geq j-1\}}\right].
\end{eqnarray}
Observe that $\{N(t) \geq j-1\} = \{N(t)<j-1\}^c = \{\sigma_{j-1} > t\}^c$ is independent of the process over the interval $[\sigma_j,\sigma_{j+1})$.  Therefore the right-hand side of \eqref{approx-wald} yields
\begin{eqnarray*}
\sum_{j=1}^\infty \EE\left[L_j I_{\{N(t) \geq j-1\}}\right] &=& \sum_{j=1}^\infty \EE[L_j] \EE[I_{\{N(t) \geq j-1\}}] \\
&=& \sum_{j=1}^\infty \EE[L_j] \left(\sum_{k=j-1}^\infty \PP(N(t)=k)\right) \\
&=& \sum_{k=1}^\infty \sum_{j=1}^{k+1} \EE[L_j] \PP(N(t)=k) \\
\end{eqnarray*}
By \propref{cycle-cost-asymptotics}, $\limsup_{k\rightarrow \infty} \frac{1}{k}\sum_{j=1}^{k} \EE[L_j] \le 3$ so for any $\epsilon > 0$ there exists some $K_0 < \infty$ such that $\frac{1}{k+1} \sum_{j=1}^{k+1} \EE[L_j] <   3+ \epsilon$ for all $k \geq K_0$.  Thus
 \begin{align*}
&\sum_{k=1}^\infty \sum_{j=1}^{k+1} \EE[L_j] \PP(N(t)=k)  \\
 &\ \ =  \sum_{k=1}^{K_0-1}   \sum_{j=1}^{k+1} \EE[L_j] \PP(N(t)=k) +  \sum_{k=K_0}^\infty \left(\mbox{$\frac{1}{k+1}$} \sum_{j=1}^{k+1} \EE[L_j]\right) (k+1) \PP(N(t)=k) \\
&\ \ \leq \sum_{k=1}^{K_0-1}  \sum_{j=1}^{k+1} \EE[L_j]  \PP(N(t)=k)  +\sum_{k=K_0}^\infty (3 + \epsilon) (k+1) \PP(N(t)=k) \\
&\ \ \leq \sum_{k=1}^{K_0-1} \sum_{j=1}^{k+1} \EE[L_j] + (3 + \epsilon)\EE[N(t)] +3 + \epsilon.
\end{align*}Since the first and last summands are constant, combining these upper bounds, dividing by $t$ and using \propref{elementary-renewal} yields
$$\limsup_{t\rightarrow \infty} \mbox{$\frac{1}{t}$}\EE\left[\int_0^t c_0(X(s))\,ds\right] \leq \lim_{t\rightarrow \infty} \frac{(3 + \epsilon)\EE[N(t)]}{t} =3 + \epsilon.$$
The result now follows since $\epsilon> 0$ is arbitrary.\end{proof}

It is now easy to show that $\{\mu_{0,t}\}$ is tight as $t\rightarrow \infty$.  

\begin{prop} \label{mu0t-tight}
Let $(\tau,Y)$ be given by \defref{stoch-tauY-def} for the drifted Brownian motion inventory model and for $t > 0$, define $\mu_{0,t}$ by \eqref{mus-t-def}.  Then $\{\mu_{0,t}: t > 1\}$ is tight as $t\rightarrow \infty$.
\end{prop}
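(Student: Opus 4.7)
The plan is to exploit the inf-compactness of $c_0(x) = 2|x|$ on $\RR$ together with the bound on long-term average holding costs already established in \propref{lta-holding-costs}. The strategy is the standard Markov-type argument: because $c_0$ is inf-compact, its sublevel sets are compact, and bounding the mass of $\mu_{0,t}$ outside a large compact set in terms of the integral $\int c_0\, d\mu_{0,t}$ will do the job.

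More specifically, for any $M > 0$, set $K_M = [-M,M]$, which is compact in $\RR$. On $K_M^c$ we have $c_0(x) = 2|x| \geq 2M$, so
\begin{equation*}
\mu_{0,t}(K_M^c) \;=\; \frac{1}{t}\,\EE\!\left[\int_0^t I_{\{|X(s)|>M\}}\,ds\right] \;\leq\; \frac{1}{2M}\cdot\frac{1}{t}\,\EE\!\left[\int_0^t c_0(X(s))\,ds\right].
\end{equation*}
So tightness of $\{\mu_{0,t}: t > 1\}$ reduces to showing that the average holding cost $\frac{1}{t}\EE[\int_0^t c_0(X(s))\,ds]$ is bounded uniformly in $t \geq 1$.

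For this, \propref{lta-holding-costs} furnishes $\limsup_{t\to\infty}\frac{1}{t}\EE[\int_0^t c_0(X(s))\,ds] \leq 3$, which immediately yields some $T_0 \geq 1$ such that the average is at most $4$ for all $t \geq T_0$. For $t \in [1,T_0]$, I would dominate by $\frac{1}{t}\EE[\int_0^t c_0(X(s))\,ds] \leq \EE[\int_0^{T_0} c_0(X(s))\,ds]$, and verify this is finite: by \propref{elementary-renewal} the expected number of positive-size orders by time $T_0$ is finite, and the discussion preceding \propref{cycle-cost-asymptotics} (with $\EE[L_\tau] = z^2 + z$ per sub-cycle) implies the expected total holding cost on $[0,T_0]$ is finite. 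This gives a single constant $C < \infty$ with $\frac{1}{t}\EE[\int_0^t c_0(X(s))\,ds] \leq C$ for all $t \geq 1$.

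Given $\epsilon > 0$, choosing $M = C/\epsilon$ then produces $\mu_{0,t}(K_M^c) \leq \epsilon$ for every $t \geq 1$, which is exactly the tightness condition. The argument is essentially routine once \propref{lta-holding-costs} is in hand; the only mildly nontrivial step is the uniform-in-$t$ bound, and the only real obstacle would have been establishing the long-term bound on holding costs, which has already been handled in the preceding proposition.
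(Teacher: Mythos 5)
Your proof is correct and follows essentially the same route as the paper: a Markov-type inequality bounding $\mu_{0,t}$ of the complement of a sublevel set of the inf-compact $c_0$ by $\frac{1}{M}\int c_0\, d\mu_{0,t}$, combined with the bound from \propref{lta-holding-costs}. The only difference is that you additionally control $t\in[1,T_0]$ to get a bound uniform over all $t\geq 1$; the paper's notion of ``tight as $t\rightarrow\infty$'' only requires the estimate for $t$ sufficiently large, so it simply restricts to $t>T$ and skips that step.
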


\begin{proof}
Choose $\epsilon > 0$ arbitrarily and pick $M$ such that $M > \frac{3}{\epsilon}+1$.  Let $T$ be such that for all $t \geq T$,
$$\mbox{$\frac{1}{t}$} \EE\left[\int_0^t c_0(X(s))\, ds\right] < 3 + \epsilon.$$
Since $c_0(x) = 2|x|$ is inf-compact, define the compact set $\Gamma = [-\frac{M}{2},\frac{M}{2}] = \{x: c_0(x) \leq M\}$.  Then for each $t>T$, 
$$ \mu_{0,t}(\Gamma^c) \leq \int_{\Gamma^c} \frac{c_0(x)}{M}\, \mu_{0,t}(dx) \leq \frac{1}{M}\int c_0(x)\, \mu_{0,t}(dx)  \leq \frac{3 + \epsilon}{M}  <  \epsilon.$$
Since $\epsilon$ is arbitrary, it follows that $\{\mu_{0,t}\}$ is tight as $t\rightarrow \infty$.
\end{proof}

The final task is to verify that the long-term average ordering costs are finite and that $\{\mu_{1,t}\}$ is not tight as $t\rightarrow \infty$.  The next proposition addresses both of these concerns since the analysis is very similar.

\begin{prop} \label{ordering-measure-results}
Let $(\tau,Y)$ be given by \defref{stoch-tauY-def} for the drifted Brownian motion inventory model.  For $t > 0$, define $\mu_{1,t}$ by \eqref{mus-t-def}.  Then 
\begin{equation}
\label{eq-limsup-order-cost}
 \limsup_{t\rightarrow \infty} \int c_1(y,z)\, \mu_{1,t}(dy\times dz) \le 3k_1+2 
\end{equation}
and $\{\mu_{1,t}: t > 1\}$ is not tight. 
\end{prop}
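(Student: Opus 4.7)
My plan is a two-part argument paralleling the machinery of \propref{lta-holding-costs} and \propref{mu0t-tight}. For the ordering-cost bound \eqref{eq-limsup-order-cost}, I define for each index $j$ of a non-zero order the deterministic lump-sum cost
$$K^{\mathrm{ord}}_j \;=\; \begin{cases} k_1+1, & \sigma_j \text{ is a } (0,1) \text{ order,} \\ (2^{i-1}+1)k_1 + 2^{(i-1)/2}, & \sigma_j = \tau_{i,2^{i-1}+1}, \end{cases}$$
which attaches to the large order of cycle $i$ the cost of the $2^{i-1}$ simultaneous zero-size orders. With this bookkeeping, the ordering cost incurred on $[0,t]$ equals exactly $\sum_{j=1}^{N(t)} K^{\mathrm{ord}}_j$, which I bound above by $\sum_{j=1}^{N(t)+1} K^{\mathrm{ord}}_j$. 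Summing across cycle $i$ reproduces $K_i = (2k_1+1)2^{i-1} + 2^{(i-1)/2} + k_1$, and the elementary inequality $2^{(i-1)/2} + k_1 \le (k_1+1)2^{i-1}$ (valid for $i\ge 1$) gives $K_i \le (3k_1+2)2^{i-1}$. Separating $\sum_{j=1}^n K^{\mathrm{ord}}_j$ into a sum over completed cycles plus a partial cycle contributing at most $j'(k_1+1) \le j'(3k_1+2)$ then yields the uniform estimate $\sum_{j=1}^n K^{\mathrm{ord}}_j \le (3k_1+2)\,n$ for every $n\ge 1$. Since the $K^{\mathrm{ord}}_j$ are deterministic, Fubini gives
$$\EE\!\left[\sum_{j=1}^{N(t)+1} K^{\mathrm{ord}}_j\right] = \sum_{k=0}^\infty \PP(N(t)=k) \sum_{j=1}^{k+1} K^{\mathrm{ord}}_j \le (3k_1+2)\bigl(\EE[N(t)]+1\bigr);$$
dividing by $t$, taking $\limsup$, and applying \propref{elementary-renewal}(b) produces \eqref{eq-limsup-order-cost}.

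For non-tightness, the key observation is that the only orders whose pre-/post-inventory pair equals the diagonal point $(2^{(i-1)/2},2^{(i-1)/2})$ are the $2^{i-1}$ zero-size orders of cycle $i$, all placed at the single (random) time $\tau_{i,2^{i-1}+1}$. Hence
$$\mu_{1,t}\bigl(\{(2^{(i-1)/2},2^{(i-1)/2})\}\bigr) \;=\; \frac{2^{i-1}}{t}\,\PP\bigl(\tau_{i,2^{i-1}+1}\le t\bigr).$$
Linearity of expectation combined with the hitting-time means derived in the proof of \propref{slln} gives
$$\EE[\tau_{i,2^{i-1}+1}] \;=\; \sum_{k=1}^{i-1}\bigl(2^{k-1}+2^{(k-1)/2}\bigr) + 2^{i-1} \;=\; 2^i - 1 + \frac{2^{(i-1)/2}-1}{2^{1/2}-1} \;\sim\; 2^i.$$
Choosing $t_i := 2\,\EE[\tau_{i,2^{i-1}+1}]$, Markov's inequality yields $\PP(\tau_{i,2^{i-1}+1}\le t_i) \ge 1/2$, and so
$$\mu_{1,t_i}\bigl(\{(2^{(i-1)/2},2^{(i-1)/2})\}\bigr) \;\ge\; \frac{2^{i-1}}{4\,\EE[\tau_{i,2^{i-1}+1}]} \;\longrightarrow\; \frac{1}{8} \quad\text{as } i\to\infty.$$
Fixing $\epsilon = 1/9$, for any compact $\Gamma\subset\overline{\cal R}$ one can choose $i$ large enough that $(2^{(i-1)/2},2^{(i-1)/2})\notin\Gamma$ and the above lower bound exceeds $\epsilon$; then $\mu_{1,t_i}(\Gamma^c) \ge \epsilon$, which contradicts tightness.

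The main technical point is that the witness time $t_i$ must grow no faster than the zero-size-order count $2^{i-1}$ (so that the ratio $2^{i-1}/t_i$ stays bounded away from zero) while still being large enough that the large order of cycle $i$ occurs before $t_i$ with fixed positive probability. Since $\EE[\tau_{i,2^{i-1}+1}]\sim 2^i$, Markov's inequality applied at $t_i = 2\EE[\tau_{i,2^{i-1}+1}]$ is just barely strong enough to close the argument; no finer concentration estimate for the hitting times is needed.
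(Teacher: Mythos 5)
Your proof is correct, but it takes a genuinely different route from the paper's in both halves. For the cost bound \eqref{eq-limsup-order-cost}, the paper computes the exact expected ordering cost in terms of the cycle-index process $I(t)$ and the within-cycle count $J(t)$, then needs the algebraic bounds $2^{I(t)-1}\le N(t)+1$ and $I(t)-1<\log_2(N(t)+1)$ together with Jensen's inequality (applied to $2^{(I(t)-1)/2}$ and to the logarithm) before invoking $\EE[N(t)]/t\to 1$. Your lumping of the $2^{i-1}$ simultaneous zero-size orders onto the large order of cycle $i$ --- which is exact, since they share the ordering time $\tau_{i,2^{i-1}+1}$ --- turns the accumulated cost into a deterministic function of the non-zero-order count, and the pointwise inequality $\sum_{j\le n}K^{\mathrm{ord}}_j\le(3k_1+2)n$ lets you pass directly to $(3k_1+2)\EE[N(t)]/t$ with no Jensen step and no $I(t)$, $J(t)$ bookkeeping; this is cleaner and recovers the same constant $3k_1+2$. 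For non-tightness, the paper aggregates the zero-size orders over \emph{all} completed cycles, bounds $2^{I(t)-1}\ge\frac12(N(t)-I(t)+2)$, and concludes $\mu_{1,t}(\Gamma^c)\to\frac12$ for \emph{every} large $t$; you instead isolate a single cycle and use Markov's inequality at the witness times $t_i=2\,\EE[\tau_{i,2^{i-1}+1}]$ to get $\mu_{1,t_i}(\Gamma^c)\ge\frac18-o(1)$ along a subsequence. Your version is more elementary (only the first moment of the hitting time is needed) but yields a weaker conclusion --- a sequence of exceptional times with constant $\frac18$ rather than a uniform asymptotic lower bound of $\frac12$; both suffice to negate tightness as $t\to\infty$, since $t_i\to\infty$.
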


\begin{proof}
We first address the lack of tightness for $\{\mu_{1,t}\}$.  Let $\Gamma\subset \overline{\cal R}$ be any compact set.  Then there exists some $N_0$ such that for all $i \geq N_0$, $(2^{(i-1)/2},2^{(i-1)/2}) \in \Gamma^c$.  

Again, denote the times of non-zero orders by ${\cal S} = \{\sigma_n: n\in \NN\}$ and for $n\geq 2$, write $n=2^{i-1}+i+j-2$ with $i \ge 2$ and $j=0,\ldots,2^{i-1}$.  Recall, the ``large'' order of size $2^{(i-1)/2}$ occurs at time $\sigma_{2^i+i-1}$.  Under policy $(\tau,Y)$, there are a further $2^{i-1}$ orders of size $0$ at time $\sigma_{2^i+i-1}$; denote this common time of ordering by $\wdt\sigma_{2^i+i-1,j}$ for $j=1,\ldots,2^{i-1}$ for each for the $0$-size orders.  Thus
\begin{eqnarray*}
\mu_{1,t}(\Gamma^c) &=& \mbox{$\frac{1}{t}$} \EE\left[\sum_{n=1}^\infty I_{\{\sigma_n \leq t\}} I_{\Gamma^c}(X(\sigma_n-),X(\sigma_n)) \right. \\
& & \left. \qquad +\; \sum_{i=1}^\infty \sum_{j=1}^{2^{i-1}} I_{\{\wdt\sigma_{2^i+i-1,j}\leq t\}} I_{\Gamma^c}(X(\wdt\sigma_{2^i+i-1,j}-),X(\wdt\sigma_{2^i+i-1,j})) \right] \\
&\geq& \mbox{$\frac{1}{t}$} \EE\left[\sum_{i=N_0}^\infty \sum_{j=1}^{2^{i-1}} I_{\{\wdt\sigma_{2^i+i-1,j}\leq t\}} I_{\Gamma^c}(2^{(i-1)/2},2^{(i-1)/2}) \right] \\
&=& \mbox{$\frac{1}{t}$} \EE\left[\sum_{i=N_0}^\infty 2^{i-1} I_{\{\sigma_{2^i+i-1}\leq t\}} \right].
\end{eqnarray*}
For each $t \geq 0$, define the processes $I$ and $J$ such that $N(t) = 2^{I(t)-1}+I(t)+J(t)-2$ in which $I(t)$ denotes the cycle in which order $N(t)$ occurs and $0 \leq J(t) \leq 2^{I(t)-1}$.  Since $N(t)$ is the number of non-zero orders placed by time $t\geq 0$, order number $N(t)$ is the $J(t)^{\mbox{\footnotesize th}}$ order within cycle $I(t)$; again $J(t)=0$ corresponds to the large order of the previous cycle.  Using the processes $I$ and $J$, it follows that for $0 \leq J(t) \leq 2^{I(t)-1}$,
\begin{eqnarray} \label{mu1-lb} \nonumber 
\mu_{1,t}(\Gamma^c) &\geq& \mbox{$\frac{1}{t}$} \EE\left[\sum_{i=N_0}^\infty 2^{i-1} I_{\{\sigma_{2^i+i-1}\leq t\}} \right] \geq \mbox{$\frac{1}{t}$} \EE\left[\sum_{\ell=2^{N_0}+N_0-1}^{I(t)-1} 2^{\ell-1}\right] \\ \nonumber
&=& \mbox{$\frac{1}{t}$} \EE\left[\sum_{\ell=1}^{I(t)-1} 2^{\ell-1} - \sum_{\ell=1}^{2^{N_0}+N_0-2} 2^{\ell-1}\right] \\
&=& \mbox{$\frac{1}{t}$} \left(\EE\left[2^{I(t)-1}\right] - 2^{2^{N_0}+N_0-2}\right).
\end{eqnarray}
 By \lemref{elementary-renewal}, $N(t) \rightarrow \infty\; (a.s.)$ as $t\rightarrow \infty$ so $I(t) \rightarrow \infty$ as well.  Thus the asymptotics of $\mu_{1,t}(\Gamma^c)$ is determined by the asymptotics of the first summand above. 

We next determine bounds on $I(t)$.  Since $J(t) \leq 2^{I(t)-1}$ and $N(t) = 2^{I(t)-1}+I(t)+J(t)-2$, we have $N(t) \leq 2^{I(t)}+I(t) -2$ and hence 
\begin{equation} \label{bd1}
2^{I(t)-1} \geq \mbox{$\frac{1}{2}$} (N(t) - I(t) +2).
\end{equation}
Since $I(t)\ge 1 $ and  $J(t) \ge  0$, $N(t) \ge  2^{I(t)-1}-1$ so 
\begin{equation} \label{I-minus-one-lb}
I(t)-1 < \log_2(N(t)+1) \leq \log_2(N(t)+N(t)) = \mbox{$\log_2(\frac{N(t)\cdot 2t}{t}) = \log_2(\frac{N(t)}{t}) + \log_2(2t)$}.
\end{equation}
Using this estimate in \eqref{bd1} yields
$$2^{I(t)-1} > \mbox{$\frac{1}{2}$} (N(t) - \log_2(\mbox{$\frac{N(t)}{t}$}) - \log_2(2t)+ 1).$$
Employing this lower bound in \eqref{mu1-lb} and Jensen's inequality on the second summand, we have
$$\mu_{1,t}(\Gamma^c) \geq \mbox{$\frac{1}{2}$} \cdot \left(\frac{\EE[N(t)]}{t} - \frac{\log_2(\EE[\frac{N(t)}{t}])}{t} - \frac{\log_2(2t)}{t}\right) \stackrel{t\rightarrow \infty}{\longrightarrow} \mbox{$\frac{1}{2}$};$$
note that we have also used \propref{elementary-renewal} on the second summand.  Therefore for any $\epsilon < \frac{1}{2}$, $\mu_{1,t}(
\Gamma^c) > \epsilon$ for all $t$ sufficiently large.  Hence $\{\mu_{1,t}\}$ is not tight as $t\rightarrow \infty$.

Consider now the total ordering costs by time $t > 0$.  First, denote $(\tau,Y) = \{(\tau_k,Y_k): k \in \NN\}$ to capture all of the orders.  Next, denote the non-zero orders by ${\cal S}$ and as above, for each $i \in \NN$ and $j=1,\ldots, 2^{i-1}$, let $\wdt\sigma_{2^i+i-1,j} = \sigma_{2^i+i-1}$ be the common time of the $0$-size orders in cycle $i$.  

Since $t$ is finite and $0 \leq J(t) \leq 2^{I(t)-1} $, 
\begin{align*}
\lefteqn{\EE\left[\sum_{k=1}^\infty I_{\{\tau_k\leq t\}} c_1(X(\tau_k-),X(\tau_k))\right]} \\
   & =  \EE\left[\sum_{i=1}^{I(t)-1} \left[2^{i-1}(k_1+1)  + (k_1 + 2^{(i-1)/2}) + 2^{i-1} k_{1}\right] + J(t) (k_{1}+1)\right]\\ 
    & \le     \EE\left[\left( (2k_1+1) (2^{I(t)-1}-1) + \frac{2^{(I(t)-1)/2}-1}{2^{1/2}-1} + k_1 (I(t)-1)\right)+ 2^{I(t) -1} (k_{1}+1)\right]\\
&=  \EE\left[(3k_1+2) 2^{I(t)-1}  + \frac{2^{(I(t)-1)/2}-1}{2^{1/2}-1} + k_1 I(t)  -3 k_{1} -1 \right].
\end{align*}
Since $I(t) \ge 1$ and $ J(t) \geq 0$, it follows that $2^{I(t)-1}    \le N(t) + 1 $ and so $2^{(I(t)-1) /2} \le (N(t)+1)^{1/2}$.  Using \eqref{I-minus-one-lb}, we also have $I(t) <  \log_{2}(\frac{N(t)}{t}) + \log_{2}(2t) + 1$. 
Then it follows from Jensen's inequality  that \begin{align*}
\EE&\left[\sum_{k=1}^\infty I_{\{\tau_k\leq t\}} c_1(X(\tau_k-),X(\tau_k))\right] \\
      & \le  (3k_{1} +2) \EE[N(t)]    + \frac{     \EE[(N(t)+1)^{1/2}]-1}{2^{1/2}-1}  + k_{1} \EE\left[ \log_{2}(N(t)/t ) + \log_{2}(2t) + 1\right] + 1 \\ 
      & \le   (3k_{1} +2) \EE[N(t)]    + \frac{   (\EE[N(t)+1])^{1/2} - 1} {2^{1/2}-1}    + k_{1}\big [ \log_{2} (\EE[N(t)/t] ) + \log_{2}(2t) + 1\big] + 1. 
\end{align*}
Now divide both sides by $t$, and then send $t\to \infty$, obtaining \eqref{eq-limsup-order-cost}  from Proposition \ref{elementary-renewal}.
\end{proof}

\begin{rem}[Final Comments]
Under further analysis, one can obtain more precise results about the costs related to the ordering policy $(\tau,Y)$ of \defref{stoch-tauY-def}.  As in the analysis of \propref{cycle-cost-asymptotics}, a lower bound on the limit inferior of the Ces\`{a}ro mean of the expected cycle costs can be shown to be $\frac{5}{2}$.  With more extensive calculations including the variance of the holding costs per cycle, these bounds can be shown to be tight and moreover that 
$$\liminf_{t\rightarrow \infty} \mbox{$\frac{1}{t}$} \int_0^t c_0(X(s))\, ds = \mbox{$\frac{5}{2}$} \; (a.s.) \quad \mbox{and} \quad \limsup_{t\rightarrow \infty} \mbox{$\frac{1}{t}$} \int_0^t c_0(X(s))\, ds = 3 \; (a.s.).$$
We also note that \propref{mu0t-tight} is proven for general inventory models in \cite{helm:17}; for completeness of the paper, we have included \propref{mu0t-tight} for this example.  
\end{rem}

\end{document}